\def\dsum_#1_#2{\sum_{{#1}\atop {#2}}}
\def\rar{\rightarrow}
\DeclareMathOperator\rZ{Z}
\begin{document}

\title[A note on Green functors with inflation]
{A note on Green functors with inflation}
\author{Alex Bartel}
\address{Mathematics Institute, Zeeman Building, University of Warwick,
Coventry CV4 7AL, UK}
\author{Matthew Spencer}
\email{a.bartel@warwick.ac.uk, M.J.Spencer@warwick.ac.uk}
\llap{.\hskip 10cm} \vskip -0.8cm
\subjclass[2010]{19A22, 20B05, 20B10}
\maketitle

\begin{abstract}
This note is motivated by the problem to understand, given a commutative ring $F$,
which $G$-sets $X$, $Y$ give rise to isomorphic $F[G]$-representations
$F[X]\cong F[Y]$. A typical step in such investigations is an argument that
uses induction theorems to give very general sufficient conditions for all
such relations to come from proper subquotients of $G$. In the present paper
we axiomatise the situation, and prove such a result in the generality of
Mackey functors and Green functors with inflation.
Our result includes, as special cases, a result of Deligne on monomial
relations, a result of the first author and Tim Dokchitser on Brauer
relations in characteristic 0, and a new result on Brauer relations
in characteristic $p>0$. We will need the new result in a forthcoming paper
on Brauer relations in positive characteristic.
\end{abstract}



\section{Introduction}

The Burnside ring $\B(G)$ of a finite group $G$ is, as a group, the free abelian
group on the set of isomorphism classes of transitive $G$-sets. Any transitive
$G$-set is isomorphic to a set of cosets $G/H$ for some $H\leq G$, so we may
write elements of $\B(G)$ as formal $\Z$-linear combinations of symbols $[G/H]$.
Let $A$ be a field of characteristic $p\geq 0$. The representation ring $\R_A(G)$
of a finite group $G$ over $A$ is, as a group, the free abelian group on the set
of isomorphism classes of finitely generated indecomposable $A[G]$-modules (not
to be confused with the Grothendieck group of the category of finitely generated
$A[G]$-modules, which is also sometimes denoted by $\R_A(G)$). For every finite
group $G$ there is a natural homomorphism $\B(G)\rar \R_A(G)$, which sends the
isomorphism class represented by a $G$-set $X$ to the isomorphism class of the
$A[G]$-module $A[X]$ with a canonical $A$-basis given by the elements of $X$,
and with $G$ acting by permutations on this basis. Let $\K_A(G)$ denote the
kernel of this homomorphism. It is easy to see that $\K_A(G)$, as a subgroup of
$\B(G)$, only depends on the characteristic of $A$, and we refer to elements of
$\K_A(G)$ as \emph{Brauer relations of $G$ in characteristic $p$}.

It is an old problem, with many applications in number theory and geometry,
to understand the structure of $\K_A(G)$ for all finite groups $G$. See e.g.
\cite[\S 1]{bra1} for a brief overview of the history of the problem and of some
of the applications. The
most efficient and, from the point of view of number theoretic and geometric
applications, the most useful way of giving a complete characterisation of
$\K_A(G)$, not just as an abstract group, but with an explicit description of
generators, is to view $\K_A(G)$ as a Mackey functor with inflation. We briefly
explain informally what this means, and refer to Section \ref{sec:MFI} for
the formal discussion.

If $H$ is a subgroup of a finite group $G$, then Brauer relations of $H$ can be
induced to Brauer relations of $G$. Moreover, if $\bar{G}$ is a quotient of a
finite group $G$, then Brauer relations of $\bar{G}$ can be lifted to Brauer
relations of $G$. Let $\Imprim_{\K_A}(G)$ be the subgroup of $\K_A(G)$ generated
by all relations that are induced from proper subgroups or lifted from proper
quotients, and let $\Prim_{\K_A}(G)$ be the quotient $\K_A(G)/\Imprim_{\K_A}(G)$.
If one can give, for every finite group $G$, generators of $\Prim_{\K_A}(G)$,
then one obtains a list of Brauer relations with the property that all Brauer
relations in all finite groups are $\Z$-linear combinations of inductions and
lifts of relations in this list.

In \cite{bra1} the structure of $\Prim_{\K_A}(G)$ has been completely determined,
in the above sense, in the case when $A$ has characteristic 0. The following
theorem was a crucial step towards that result. If $q$ is a prime number, then
a group is called \emph{$q$-quasi-elementary} if it has a normal cyclic
subgroup of $q$-power index. A group is called \emph{quasi-elementary} if it is
$q$-quasi-elementary for some prime number $q$.

\begin{theorem}[\cite{bra1}, Theorem 4.3]\label{thm:main0}
Let $G$ be a finite group that is not quasi-elementary. Then:
\begin{enumerate}[leftmargin=*,label={\upshape(\alph*)}]
\item if all proper quotients of $G$ are cyclic, then $\Prim_{\K_{\Q}}(G)\cong \Z$;
\item if $q$ is a prime number such that all proper quotients of $G$ are $q$-quasi-elementary,
and at least one of them is not cyclic, then $\Prim_{\K_{\Q}}(G)\cong \Z/q\Z$;
\item if there exists a proper quotient of $G$ that is not quasi-elementary,
or if there exist distinct prime numbers $q_1$ and $q_2$ and, for $i=1$ and $2$,
a proper quotient of $G$ that is non-cyclic $q_i$-quasi-elementary, then
$\Prim_{\K_{\Q}}(G)$ is trivial.
\end{enumerate}
Moreover, in all cases, $\Prim_{\K_{\Q}}(G)$ is generated by any element of
$\K_{\Q}(G)\subseteq \B(G)$ of the form $[G/G]+\sum_{H\lneq G}a_H[G/H]$, $a_H\in \Z$.
\end{theorem}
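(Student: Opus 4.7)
My plan is to organise the argument around the single quantity $\epsilon(\Xi)\in \Z$ equal to the coefficient of $[G/G]$ in $\Xi\in \K_\Q(G)\subseteq \B(G)$. Since $\mathrm{Ind}_H^G \alpha$ with $H\lneq G$ involves only $[G/K]$ with $K\le H\lneq G$, we have $\epsilon\bigl(\mathrm{Ind}_H^G\alpha\bigr)=0$, while $\epsilon\bigl(\Inf_{G/N}^G\beta\bigr)$ equals the analogous coefficient $\epsilon_{G/N}(\beta)$ of $[(G/N)/(G/N)]$ in $\beta$. Consequently $\epsilon$ sends $\Imprim_{\K_\Q}(G)$ onto the subgroup $J:=\sum_{1\ne N\trianglelefteq G}\epsilon_{G/N}\bigl(\K_\Q(G/N)\bigr)\subseteq \Z$. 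Once one knows that (i) $\epsilon:\K_\Q(G)\to \Z$ is surjective and (ii) $\ker \epsilon\subseteq \Imprim_{\K_\Q}(G)$, the quotient $\Prim_{\K_\Q}(G)$ identifies with $\Z/J$, visibly generated by the class of any element $\Theta_G=[G/G]+\sum_{H\lneq G}a_H[G/H]$ in $\K_\Q(G)$.

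Step (i) is the existence of $\Theta_G$. Artin's induction theorem easily yields an element of $\K_\Q(G)$ with $[G/G]$-coefficient $|G|$; the delicate task is to adjust the coefficient down to $1$, using the hypothesis that $G$ is not quasi-elementary in an essential way. I expect this to be the deepest input into the proof, probably requiring an explicit induction-theoretic construction in the spirit of the monomial-relations work mentioned in the abstract, or built up from the classical calculation of $\K_\Q$ on quasi-elementary groups. Step (ii) I would attack by strong induction on $|G|$: given $\Xi=\sum_{H\lneq G}c_H[G/H]\in \K_\Q(G)$, the triviality of its rational character is a strong constraint that, together with the inductive knowledge of $\K_\Q$ on proper subgroups (in particular the existence of $\Theta_H$ for non-quasi-elementary $H$ and the explicit generators when $H$ is quasi-elementary), should allow one to cancel the $[G/H]$-terms one maximal subgroup at a time, reducing $\Xi$ to a combination of inductions from proper subgroups and inflations from proper quotients.

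Once (i) and (ii) are in hand, the case analysis is a direct computation of $J$ via the same induction on $|G|$: the image $\epsilon_{G/N}(\K_\Q(G/N))$ is $\{0\}$ when $G/N$ is cyclic, because $\K_\Q$ vanishes on cyclic groups, their rational characters being determined by subgroups; it equals $q\Z$ when $G/N$ is non-cyclic $q$-quasi-elementary, by a classical calculation in which a generator of $\K_\Q(G/N)$ has top coefficient $q$; and it is all of $\Z$ when $G/N$ is non-quasi-elementary, by step (i) applied to $G/N$. Summing over $N$ yields $J=\{0\}$ in case (a), $J=q\Z$ in case (b), and $J=\Z$ in case (c) — either because some $G/N$ is non-quasi-elementary, or because $q_1\Z+q_2\Z=\Z$ for distinct primes $q_1,q_2$ — producing $\Prim_{\K_\Q}(G)\cong \Z$, $\Z/q\Z$, and the trivial group in the three cases. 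The main obstacle is step (i).
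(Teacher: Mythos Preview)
Your architecture --- extract the top coefficient $\epsilon$, identify $\Prim_{\K_\Q}(G)$ with $\Z/J$ where $J=\sum_{1\neq N}\epsilon_{G/N}(\K_\Q(G/N))$, then compute $J$ case by case --- is exactly what the paper does (your $J$ is the paper's ideal $\mathfrak a$ in its Theorem~4.7, specialised to $\B\to\R_\Q$), and your case analysis of $J$ is correct. But both of your ``steps'' have real gaps.

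For step (i), the input you are missing has a name: Solomon's induction theorem. It says that for any finite group $G$, the trivial module $\Q[G/G]$ is an \emph{integral} combination of permutation modules induced from quasi-elementary subgroups; when $G$ is not quasi-elementary all of these are proper, and that is precisely your $\Theta_G$. No new construction is needed.

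For step (ii), the induction you sketch (``cancel the $[G/H]$-terms one maximal subgroup at a time'') does not obviously work: individual summands $c_H[G/H]$ are not in $\K_\Q$, and for quasi-elementary $H$ you have no $\Theta_H$ with top coefficient~$1$ to subtract. The paper instead uses the \emph{ring} structure of $\B(G)$ via Frobenius reciprocity. Given $\Theta_G=[G/G]+\sum_{H\lneq G}\Ind_{G/H}x_H\in\K_\Q(G)$ from step (i) and any $\Xi\in\K_\Q(G)$, one computes
\[
\Xi\cdot\Theta_G-\Xi \;=\; \sum_{H\lneq G}\Xi\cdot\Ind_{G/H}x_H
\;=\;\sum_{H\lneq G}\Ind_{G/H}\bigl(\Res_{G/H}\Xi\cdot x_H\bigr)\in\cI_{\K_\Q}(G),
\]
since each $\Res_{G/H}\Xi\in\K_\Q(H)$. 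One checks (again via Frobenius reciprocity, using that $\B(G)=\Z[G/G]+\cI_\B(G)$) that $\cI_{\K_\Q}(G)+\Z\Theta_G$ is an ideal of $\B(G)$, so $\Xi\cdot\Theta_G$ lies in it too; hence $\Xi\in\cI_{\K_\Q}(G)+\Z\Theta_G$. Comparing $\epsilon$ now gives $\ker\epsilon\subseteq\cI_{\K_\Q}(G)\subseteq\Imprim_{\K_\Q}(G)$. This two-line multiplication-by-$\Theta_G$ trick replaces your proposed induction entirely.
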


Deligne \cite{Del-73} had proven a similar result on relations between monomial
representations, see Theorem \ref{thm:Del} below.

The main motivation for this paper is to understand $\Prim_{\K_A}(G)$ when $A$
has positive characteristic. To that end, we prove the following characteristic
$p$ analogue of Theorem \ref{thm:main0}, which will be used in a forthcoming
paper to give a characterisation of $\Prim_{\F_p}(G)$. If $p$ and $q$ are prime
numbers, then a group is called \emph{$p$-hypo-elementary} if it has a normal $p$-subgroup with
cyclic quotient, and it is called a \emph{$(p,q)$-Dress group} if it has a normal $p$-subgroup
with $q$-quasi-elementary quotient.

\begin{theorem}\label{thm:mainp}
Let $G$ be a finite group that is not a $(p,q)$-Dress group for any prime number
$q$. Then:
\begin{enumerate}[leftmargin=*,label={\upshape(\alph*)}]
\item if all proper quotients of $G$ are $p$-hypo-elementary,
then $\Prim_{\K_{\F_p}}(G)\cong \Z$;
\item if $q$ is a prime number such that all proper quotients of $G$ are
$(p,q)$-Dress groups,
and at least one of them is not $p$-hypo-elementary, then $\Prim_{\K_{\F_p}}(G)\cong \Z/q\Z$;
\item if there exists a proper quotient of $G$ that is not a $(p,q)$-Dress group
for any prime number $q$, or if there exist distinct prime numbers $q_1$ and $q_2$
and, for $i=1$ and $2$, a proper quotient of $G$ that is a non-$p$-hypo-elementary
$(p,q_i)$-Dress group, then $\Prim_{\K_{\F_p}}(G)$ is trivial.
\end{enumerate}
Moreover, in all cases, $\Prim_{\K_{F_p}}(G)$ is generated by any element of
$\K_{\F_p}(G)\subseteq \B(G)$ of the form $[G/G]+\sum_{H\lneq G}a_H[G/H]$, $a_H\in \Z$.
\end{theorem}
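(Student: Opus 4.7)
The plan is to derive Theorem \ref{thm:mainp} as a concrete instance of the abstract result about Green functors with inflation that is announced in the abstract. The framework to be built in Section \ref{sec:MFI} should take as input a morphism of Green functors with inflation $\rho:\B\to \R$, together with an induction theorem specifying a class $\mathcal{D}$ of ``elementary'' subgroups from which $\rho(G)$ induces surjectively, and deliver a computation of $\Prim_{\ker\rho}(G)$ in terms of proper quotients of $G$, provided $G$ itself is not in $\mathcal{D}$. Theorem \ref{thm:main0} is exactly this principle applied with $\R=\R_\Q$ and $\mathcal{D}=\{\text{quasi-elementary groups}\}$ (Artin induction); the content of Theorem \ref{thm:mainp} will be the instance with $\R=\R_{\F_p}$.

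The decisive input in characteristic $p$ is Conlon's induction theorem: the image of $\B(G)$ in $\R_{\F_p}(G)$ is generated, under induction, from the $(p,q)$-Dress subgroups of $G$, as $q$ varies over primes. So I would first verify that $\B\to \R_{\F_p}$ is a morphism of Green functors with inflation and that $\K_{\F_p}$ is its kernel. The $(p,q)$-Dress groups play the role that quasi-elementary groups play rationally; in particular, for any $G$ not itself a $(p,q)$-Dress group, there is no primitive relation supported on $G$ alone, and the problem reduces to a comparison of $\K_{\F_p}$ at proper quotients. The ``basic'' quotients at which one expects a primitive class to survive are the $p$-hypo-elementary groups (where $\K_{\F_p}=0$, so Artin-type cancellations go through up to $p$-parts), and more generally $(p,q)$-Dress groups, where a non-trivial $q$-torsion obstruction can appear.

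I would then carry out the trichotomy exactly as in \cite{bra1}. In case (a), all proper quotients are $p$-hypo-elementary, so $\K_{\F_p}$ vanishes on them; the long exact sequence (in the sense of MFIs) identifies $\Prim_{\K_{\F_p}}(G)$ with the cokernel of an inflation-restriction map that turns out to be $\Z$, and the generator is precisely a relation of the form $[G/G]+\sum_{H\lneq G}a_H[G/H]$. In case (b), one proper quotient is a non-$p$-hypo-elementary $(p,q)$-Dress group; the abstract framework produces a $\Z/q\Z$ obstruction, where the exponent $q$ is pinned down by the index of the image of induction from proper Dress subgroups inside $\R_{\F_p}$ of that quotient. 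In case (c), two independent such obstructions, or a proper quotient outside the Dress class altogether, allow one to write any candidate primitive class as a sum of induced and inflated relations, so $\Prim_{\K_{\F_p}}(G)=0$.

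The main obstacle is not the case-by-case analysis, which will mirror the rational argument of \cite{bra1} almost verbatim once the abstract machinery is in place. Rather, the real work lies in formulating the axiomatic Mackey/Green-functor-with-inflation theorem strongly enough that both $(\R_\Q,\text{quasi-elementary})$ and $(\R_{\F_p},(p,q)\text{-Dress})$, as well as Deligne's monomial setting, are clean specialisations of it. In particular, one must ensure that the induction-theoretic input interacts correctly with inflation, which is a subtle point in positive characteristic, since Dress-type induction classes are not always closed under quotients in the way quasi-elementary groups are. Once this compatibility is established, the identification of the $\pm([G/G]+\cdots)$ generator in each case follows from the same primitive-idempotent/Artin-defect computation used in the rational proof.
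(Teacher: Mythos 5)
Your overall strategy --- specialise the abstract GFI machinery to $m_{\F_p}\colon\B\to\R_{\F_p}$, identify the relevant ``elementary'' class via an induction theorem, and run the trichotomy over proper quotients --- is the same as the paper's. But there is a genuine gap at the heart of case (b), and it propagates into case (a) and the ``moreover'' clause. The abstract machinery (Theorem \ref{thm:primeqR} together with Corollaries \ref{cor:primR}--\ref{cor:primtriv}) computes $\Prim_{\K_{\F_p}}(G)\cong\Z/\fa$, where $\fa$ is generated by those integers $a$ for which some proper quotient $\bar G=G/N$ admits an element $a[\bar G/\bar G]+\sum_{H\lneq \bar G}a_H[\bar G/H]\in\K_{\F_p}(\bar G)$. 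To evaluate $\fa$ you need two facts that you assert but do not prove: (i) for every non-$p$-hypo-elementary $(p,q)$-Dress group $U$ there actually \emph{exists} an element $q[U/U]+\sum_{H\lneq U}a_H[U/H]\in\K_{\F_p}(U)$, so that $q\in\fa$; and (ii) no such element exists with leading coefficient coprime to $q$, i.e.\ $(p,q)$-Dress groups are primordial for $\Im m_{\F_p}$, so that $\fa\subseteq q\Z$. This is precisely Theorem \ref{thm:indthm}, which the paper singles out as a new induction theorem and proves separately: (ii) by restricting a putative relation to the normal $p$-hypo-elementary subgroup and applying Conlon's theorem to extract the coefficient of the trivial module; (i) by passing to a quotient $\bar G$ that is $q$-quasi-elementary with faithful action of the $q$-part, invoking Dokchitser's theorem on Solomon induction there, transferring the resulting element of $\K_{\Q}$ into $\K_{\F_p}$ via Artin plus Conlon (using that the cyclic and $p$-hypo-elementary subgroups of $\bar G$ coincide), and inflating back. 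Your phrase that ``the exponent $q$ is pinned down by the index of the image of induction'' names the answer but supplies no argument; without (i) and (ii) you can conclude neither $\Z/q\Z$ in (b) nor that (a) gives $\Z$ rather than a proper quotient (the latter also needs every $p$-hypo-elementary quotient to be primordial for $(\Im m_{\F_p})_{\Q}$, again Conlon).

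Two smaller points. The integral statement that $\R_{\F_p}(G)$ is induced from $(p,q)$-Dress subgroups is Dress's induction theorem, not Conlon's; Conlon's theorem is the rational statement about $p$-hypo-elementary groups, and the paper needs both, in different places. Relatedly, your claim that $\K_{\F_p}$ ``vanishes'' on $p$-hypo-elementary groups is not what is used (and is not obviously true); what is needed is the weaker primordiality statement that no nonzero multiple of the identity of $\B(\bar G)$ maps into $\cI_{\Im m_{\F_p}}(\bar G)$ rationally. Finally, the observation that a group is simultaneously a $(p,q_1)$- and a $(p,q_2)$-Dress group for distinct primes $q_1,q_2$ exactly when it is $p$-hypo-elementary is what separates cases (b) and (c); it is absent from your outline.
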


To prove part (b) of Theorem \ref{thm:mainp}, we prove an induction theorem
for $(p,q)$-Dress groups, which we believe to be of independent interest.
It is a characteristic $p$ analogue of the main Theorem of \cite{solomon}.
\begin{theorem}\label{thm:indthm}
Let $p$ and $q$ be prime numbers, let $G$ be a $(p,q)$-Dress group that is not
$p$-hypo-elementary, and let $a$
be an integer. Then there exists an element in $\K_{\F_p}(G)$ of the form
$a[G/G]+\sum_{H\lneq G}a_H[G/H]$, $a_H\in \Z$ if and only if $q|a$.
\end{theorem}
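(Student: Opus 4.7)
Set $N=O_p(G)$, so that $\bar G:=G/N$ is \qe{q} by the $(p,q)$-Dress hypothesis, and non-cyclic because $G$ is not $p$-hypo-elementary. The plan is to reduce, via inflation along $G\twoheadrightarrow\bar G$, to the analogous statement for $\bar G$, and then to invoke (and, when $q=p$, adapt) Solomon's classical induction theorem \cite{solomon}. For each $\bar H\le\bar G$ with preimage $H\le G$, the $\F_p[G]$-module $\F_p[G/H]$ is the inflation of the $\F_p[\bar G]$-module $\F_p[\bar G/\bar H]$; consequently, inflation induces an injection $\K_{\F_p}(\bar G)\hookrightarrow\K_{\F_p}(G)$ that preserves the coefficient on $[G/G]$, and the $N$-coinvariants functor on $\F_p[G]$-modules induces a map $\K_{\F_p}(G)\to\K_{\F_p}(\bar G)$ in the opposite direction.

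For the ``if'' direction it then suffices to produce an element $q[\bar G/\bar G]+\sum_{\bar H<\bar G}a_{\bar H}[\bar G/\bar H]$ in $\K_{\F_p}(\bar G)$. Solomon's theorem furnishes such an element in $\K_{\Q}(\bar G)$. When $q\ne p$, the fact that $O_p(\bar G)=1$ forces the cyclic normal subgroup of $\bar G$ to have order prime to $p$, while its $q$-group complement is automatically of $p$-prime order; hence $p\nmid|\bar G|$, the algebra $\F_p[\bar G]$ is semisimple, and $\K_{\Q}(\bar G)=\K_{\F_p}(\bar G)$, so Solomon's relation inflates directly to the required element of $\K_{\F_p}(G)$. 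When $q=p$, semisimplicity fails; exploiting the structure $\bar G=\bar C\rtimes\bar Q$ with $\bar C$ cyclic of $p'$-order and $\bar Q$ a $p$-group, one constructs the analogue of Solomon's relation directly over $\F_p$ and verifies by hand that the relevant virtual linear combination of permutation modules is a genuine isomorphism of $\F_p[\bar G]$-modules.

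For the ``only if'' direction, given $r=a[G/G]+\sum_{H<G}a_H[G/H]\in\K_{\F_p}(G)$, one builds a $\Z$-linear map $\K_{\F_p}(G)\to\Z/q\Z$ that returns $a\bmod q$ on $r$. The main ingredient is the $N$-coinvariants map $\K_{\F_p}(G)\to\K_{\F_p}(\bar G)$, composed with the detecting invariant on $\bar G$ coming from Solomon's theorem: this sees the coefficient $a+\sum_{H<G,\,HN=G}a_H$ modulo $q$. The residual contribution from subgroups $H<G$ with $HN=G$ is then controlled by a further invariant -- for instance the multiplicity of the trivial $\F_p[G]$-module as a direct summand of $\F_p[G/H]$, which detects precisely those $H$ containing a Sylow-$p$ of $G$ -- combined with the dimension identity $a+\sum_{H<G}a_H[G:H]=0$ forced by $r\in\K_{\F_p}(G)$.

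The principal obstacle is the case $q=p$: here the classical rational-character arguments underpinning both Solomon's relation and its detecting invariant are unavailable, so one must prove a genuine characteristic-$p$ induction theorem for non-cyclic \qe{p} groups by working directly with the indecomposable decomposition of permutation modules for $\bar G=\bar C\rtimes\bar Q$. This is the substantively new content of the theorem over its characteristic-zero predecessor.
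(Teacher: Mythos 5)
Your setup (inflation from $\bar G=G/O_p(G)$, and the observation that $p\nmid|\bar G|$ when $q\neq p$, so that $\K_{\Q}(\bar G)=\K_{\F_p}(\bar G)$ by semisimplicity) is sound, but both halves of the argument have gaps exactly where the theorem has content. In the ``if'' direction the entire case $q=p$ is deferred to ``one constructs the analogue of Solomon's relation directly over $\F_p$ and verifies by hand'' --- you correctly identify this as the substantively new content, but then no construction or verification is given, so this case is simply not proved. The paper avoids any hand construction by a detection argument you are missing: it passes to a further quotient $\bar G/K$, where $K$ is the kernel of the action of the $q$-part on the cyclic part, arranged so that in the resulting group the cyclic subgroups are exactly the $p$-hypo-elementary subgroups. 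Artin's induction theorem says an element of $\B$ lies in $\K_{\Q}$ iff it has zero fixed points on all cyclic subgroups, and Conlon's induction theorem says it lies in $\K_{\F_p}$ iff it has zero fixed points on all $p$-hypo-elementary subgroups; when these two families of subgroups coincide, Solomon's rational relation is automatically an $\F_p$-relation. Without this idea (or an explicit module-theoretic substitute, which would be a nontrivial piece of work) the case $q=p$ remains open in your write-up. Note also that quotienting only by $O_p(G)$ is not enough for this trick, since a $p$-group acting non-faithfully on the cyclic part produces non-cyclic $p$-hypo-elementary subgroups.

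In the ``only if'' direction the bookkeeping does not close. Coinvariants by $N=O_p(G)$ followed by the Solomon-type detector on $\bar G$ yields $q\mid a+\sum_{H\lneq G,\,HN=G}a_H$, so you must control $\sum_{H\lneq G,\,HN=G}a_H$ modulo $q$. But every such $H$ has $[G:H]$ a nontrivial power of $p$, whereas your auxiliary invariant (the multiplicity of the trivial direct summand of $\F_p[G/H]$) is supported on the subgroups with $p\nmid[G:H]$; the two constraints therefore involve disjoint sets of proper subgroups and cannot be combined to extract $a\bmod q$, and the dimension identity does not repair this. The paper instead uses the \emph{other} canonical normal subgroup, namely the normal $p$-hypo-elementary subgroup $N'=P\rtimes C$ of $q$-power index, and \emph{restricts} to it rather than passing to a quotient. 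By the Mackey formula, the summands of $\Res_{G/N'}\F_p[G/H]$ isomorphic to the trivial module $\F_p[N'/N']$ come exactly from those $H$ containing $N'$, each contributing $[G:H]$ copies; Conlon's theorem (primordiality of $p$-hypo-elementary groups) forces the total coefficient of $\F_p[N'/N']$ to vanish, giving $a=\sum_{N'\le H\lneq G}(\pm a_H)[G:H]$ with every index a nontrivial $q$-power, whence $q\mid a$. Some such restriction to a normal subgroup of $q$-power index is needed to make your ``only if'' argument work.
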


In fact, we deduce Theorems \ref{thm:main0} and \ref{thm:mainp}, as well as
Deligne's theorem on monomial relations, as special cases of a general result
on kernels of morphisms between Green functors with inflation. This formalism,
which is a mix of axiomatisations that have appeared in the literature many
times before, see e.g. \cite{Webb} and \cite{Boltje}, will be introduced
in Section \ref{sec:MFI}. In Section \ref{sec:primordial} we recall the concept
of primordial groups for a Mackey functor. Our main theorems on kernels of
morphisms of Green functors will be proven in Section \ref{sec:primitive}.
Section \ref{sec:appli} is devoted to concrete applications, and it is there
that we prove Theorems \ref{thm:main0}, \ref{thm:mainp}, and \ref{thm:indthm}.

\begin{acknowledgements}
During parts of this project, the first author was partially supported by
a Research Fellowship from the Royal Commission for the Exhibition
of 1851, and by an EPSRC First Grant, and the second author is supported by an
EPSRC Doctoral Grant. We would like to thank these institutions for their financial support.
We would also like to thank an anonymous referee for a careful reading of the paper
and for numerous helpful suggestions. 
\end{acknowledgements}

Our rings are always assumed to be associative, with a unit element. Let $R$ be a
commutative ring. By an $R$-algebra we mean a ring $A$ equipped with
a map $R\rar \rZ(A)$, where $\rZ(A)$ denotes the centre of $A$. If $\fp$ is
a prime ideal of $R$, then $R_{\fp}$ denotes the localisation of $R$ at $\fp$.
In this paper, $R$ will always denote a domain.

\section{Mackey and Green functors with inflation}\label{sec:MFI}
One can find many variations on the theme of Mackey functors in the literature.
The axiomatisation that we need is very similar to those of \cite{Webb,Boltje}.

\begin{definition}
A \emph{global Mackey functor with inflation} (MFI)
over $R$ is a collection $\cF$ of the following data.
\begin{itemize}
\item For every finite group $G$, $\cF(G)$ is an $R$-module;
\item for every monomorphism $\alpha\colon H\hookrightarrow G$ of finite groups,
$\cF_*(\alpha)\colon \cF(H)\rightarrow \cF(G)$ is a covariant $R$-module homomorphism
(which we think of as induction);
\item for every homomorphism $\epsilon\colon H\rar G$ of finite groups,
$\cF^{*}(\epsilon)\colon \cF(G)\rar \cF(H)$ is a contravariant $R$-module
homomorphism (which we think of as restriction when $\epsilon$ is a monomorphism,
and as inflation when $\epsilon$ is an epimorphism);
\end{itemize}
satisfying the following conditions.
\begin{enumerate}
\item[(MFI 1)] Transitivity of induction:
for all group monomorphisms
$U\stackrel{\beta}{\hookrightarrow}H\stackrel{\alpha}{\hookrightarrow}G$, we
have $\cF_*(\alpha\beta) = \cF_*(\alpha)\cF_*(\beta)$.
\item[(MFI 2)] Transitivity of restriction/inflation:
for all group homomorphisms
$U\stackrel{\beta}{\rar}H\stackrel{\alpha}{\rar}G$, we have
$\cF^*(\alpha\beta) = \cF^*(\beta)\cF^*(\alpha)$.
\item[(MFI 3)] For all inner automorphisms $\alpha\colon G\rar G$, we have
$\cF^*(\alpha)=\cF_*(\alpha)=~1$.
\item[(MFI 4)] For all automorphisms $\alpha$, we have
$\cF_*(\alpha)=\cF^*(\alpha^{-1})$.
\item[(MFI 5)] The Mackey condition: for all pairs of monomorphisms
$\alpha\colon H\hookrightarrow G$ and $\beta\colon K\hookrightarrow G$, we have
$$
\cF^*(\beta)\cF_*(\alpha)=\sum_{g\in \alpha(H)\backslash G/\beta(K)}\cF_*(\phi_g)\cF^*(\psi_g),
$$
where $\phi_g$ is the composition
$$
\phi_g\colon \beta(K)^g\cap \alpha(H)\stackrel{c_g}{\rar}
\beta(K)\cap {}^g\alpha(H)\hookrightarrow \beta(K)\stackrel{\beta^{-1}}{\rar}K,
$$
$c_g$ denoting conjugation by $g$,
and $\psi_g$ is the composition
$$
\psi_g\colon \alpha(H)\cap \beta(K)^g\hookrightarrow \alpha(H)
\stackrel{\alpha^{-1}}{\rar}H.
$$
\item[(MFI 6)] Commutativity of induction and inflation: whenever there is a commutative
diagram
$$
\xymatrix{
H\ar[d]_{\epsilon}\ar[r]^{\alpha}& G\ar[d]^{\delta}\\
\bar{H}\ar[r]^{\beta}& \bar{G},
}
$$
where $\epsilon,\delta$ are epimorphisms, and $\alpha,\beta$ are monomorphisms,
we have $\cF^{*}(\delta)\cF_*(\beta)=\cF_*(\alpha)\cF^{*}(\epsilon)$.
\end{enumerate}
\end{definition}

We will often use the following more intuitive notation: if $\cF$ is an MFI,
and $\alpha\colon H\hookrightarrow G$ is a monomorphism, we will write $\Res_{G/H}$ for
$\cF^*(\alpha)$, and $\Ind_{G/H}$ for $\cF_*(\alpha)$. The suppressed
dependence on $\alpha$ and $\cF$ will not cause any confusion. Similarly,
if $\epsilon\colon G\rar \bar{G}$ is an epimorphism with kernel $N$, we will
write $\Inf_{G/N}$ for $\cF^*(\epsilon)$.


\begin{definition}
A \emph{Green functor with inflation} (GFI) over $R$ is an MFI $\cF$ over $R$,
satisfying the following additional conditions.
\begin{enumerate}
\item[(GFI 1)] For every finite group $G$, $\cF(G)$ is an $R$-algebra.

\item[(GFI 2)] For every homomorphism $\alpha\colon H\rar G$ of finite groups,
$\cF^*(\alpha)$ is a homomorphism of $R$-algebras.

\item[(GFI 3)] Frobenius reciprocity: for every monomorphism $\alpha\colon H\hookrightarrow G$
and for all $x\in \cF(H)$, $y\in \cF(G)$, we have
\beq
\Ind_{G/H}(x)\cdot y & = &
\Ind_{G/H}(x\cdot \Res_{G/H}(y)),\\
y\cdot\Ind_{G/H}(x) & = &
\Ind_{G/H}(\Res_{G/H}(y)\cdot x).
\eeq
\end{enumerate}
\end{definition}
\begin{definition}
A \emph{morphism} from an MFI (respectively GFI) $\cF$ to an MFI (respectively GFI)
$\cG$ is a collection $r$ of $R$-module (respectively $R$-algebra) homomorphisms
$r_G: \cF(G)\rar \cG(G)$ for each finite group $G$, commuting in the obvious way
with $\cF_*,\cF^*,\cG_*,\cG^*$.
\end{definition}


\begin{definition}
Let $\cF$ be a GFI over $R$. A (left) \emph{module} under $\cF$ is an MFI
$\cM$ over $R$, satisfying the following conditions.
\begin{enumerate}
\item[(MOD 1)] For every group $G$, $\cM(G)$ is an $R$-linear (left) $\cF(G)$-module,
i.e. there is a map $\cF(G)\times \cM(G)\rar \cM(G)$ factoring through
$\cF(G)\otimes_R \cM(G)$.
\item[(MOD 2)] For every homomorphism $\epsilon\colon H\rar G$, and for all $x\in \cF(G)$,
$y\in \cM(G)$, we have
$$
\cM^*(\epsilon)(x\cdot y)=\cF^*(\epsilon)(x)\cdot \cM^*(\epsilon)(y).
$$

\item[(MOD 3)] For every monomorphism $\alpha\colon H\hookrightarrow G$ and for all
$x\in \cF(H)$, $y\in \cM(G)$, we have
$$
\cF_*(\alpha)(x)\cdot y = \cF_*(\alpha)(x\cdot\cM^*(\alpha)(y)).
$$
\end{enumerate}
\end{definition}
\begin{example}\label{ex:1}
The following are examples of GFIs over $\Z$.
\begin{enumerate}[leftmargin=*,label={\upshape(\alph*)}]
\item The Burnside ring functor $\B$: for a finite group $G$,
$\B(G)$ is the free abelian group on isomorphism classes $[X]$ of finite $G$-sets,
modulo the relations $[X\sqcup Y]-[X]-[Y]$ for all $G$-sets $X$, $Y$, and with
multiplication defined by $[X]\cdot [Y]=[X\times Y]$. Here, $\B_*$ is the usual
induction of $G$-sets, and $\B^*$ is inflation/restriction of $G$-sets.

\item\label{ex:repring} The representation ring functor $\R_F$ over a given field $F$:
for a finite group $G$, $\R_F(G)$ is the free abelian group on isomorphism
classes $[V]$ of finitely generated $F[G]$-modules, modulo the relations
$[U\oplus V]-[V]-[U]$, and with multiplication defined by
$[U]\cdot [V]=[U\otimes_F V]$, with diagonal $G$-action on the tensor product.
As in the previous example, $(\R_F)_*$ is induction of modules, and
$(\R_F)^*$ is inflation/restriction.

\item\label{ex:monomial} The monomial ring functor $\M$: for a finite group $G$,
$\M(G)$ is the free abelian group on conjugacy classes of symbols $[H,\lambda]$,
where $H$ runs over subgroups of $G$, and $\lambda$ runs over complex 1-dimensional
representations of $H$, and with multiplication defined by
\beq
\lefteqn{[H,\lambda]\cdot [K,\chi]=}\\
& & \sum_{g\in H\backslash G/K}[{}^gH\cap K,
\Res_{{}^gH/({}^gH\cap K)}{}^g\lambda\cdot \Res_{K/({}^gH\cap K)}\chi].
\eeq
If $\alpha:U\hookrightarrow G$ is a monomorphism, $[H,\lambda]\in \M(U)$,
and $[K,\chi]\in \M(G)$, then
\beq
& & \M_*(\alpha)([H,\lambda]) =[\alpha(H),\lambda\circ \alpha^{-1}],\\
& & \M^*(\alpha)([K,\chi]) =\\
& & \displaystyle \sum_{g\in \alpha(U)\backslash G/K}[\alpha^{-1}(\alpha(U)\cap {}^gK),\Res_{{}^gK/(\alpha(U)\cap {}^gK)} {}^g\chi\circ \alpha].
\eeq
\end{enumerate}
\end{example}

Every GFI is a module under itself, called the (left) \emph{regular module}.
We also have the obvious notions of sub-MFIs, sub-GFIs, and submodules.
\begin{definition}
A \emph{left ideal of
a GFI} is a sub-MFI that is also a submodule of the left regular module.
\end{definition}
\begin{definition}
Let $r:\cF\rar\cG$ be a morphism of MFIs over $\R$. Its \emph{kernel} $\cK$ is
defined as follows: for every finite group $G$, we define
$\cK(G)=\ker(r(G)\colon \cF(G)\rar\cG(G))$; for every homomorphism
$\epsilon\colon H\rar G$ of groups, we define $\cK^*(\epsilon)=\cF^*(\epsilon)|_{\cF(G)}$;
and for every monomorphism $\alpha\colon H\rar G$ of groups, we define
$\cK_*(\alpha)=\cF_*(\alpha)|_{\cF(H)}$. The \emph{image} of a morphism is
defined analogously. Let $\cF$ be a sub-MFI (respectively an ideal) of the MFI
(respectively GFI) $\cG$. The quotient $\cQ=\cG/\cF$ is defined as follows:
for every finite group $G$, we define 
$\cQ(G)=\cG(G)/\cF(G)$; for every homomorphism
$\epsilon\colon H\rar G$, we define $\cQ^*(\epsilon)=\cG^*(\epsilon) \pmod{\cF(H)}$;
and for every monomorphism $\alpha\colon H\rar G$, we define
$\cQ_*(\alpha)=\cG_*(\alpha) \pmod{\cF(G)}$.
\end{definition}

The proof of the following is routine and will be omitted.

\begin{lemma}\label{lem:routine}
\begin{enumerate}[leftmargin=*,label={\upshape(\alph*)}]
\item Let $r\colon \cF\rar\cG$ be a morphism of MFIs over $R$. Then its kernel is
a sub-MFI of $\cF$, and its image is a sub-MFI of $\cG$.
\item Let $r\colon \cF\rar\cG$ be a morphism of GFIs over $R$. Then its
kernel is an ideal of $\cF$, and its image is a sub-GFI of $\cG$.
\item Let $\cF$ be a sub-MFI of an MFI $\cG$. Then the quotient $\cG/\cF$
is an MFI.
\item Let $\cF$ be an ideal of a GFI $\cG$. Then $\cG/\cF$ is a GFI.
\end{enumerate}
\end{lemma}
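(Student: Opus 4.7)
The proof is genuinely routine, as the statement merely formalises the expectation that kernels, images, and quotients interact well with the MFI/GFI structure. I would dispatch the four parts as follows.

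For part (a), the plan is to fix a finite group $G$ and observe that $\cK(G)=\ker r_G$ is an $R$-submodule of $\cF(G)$ essentially by definition. To see that $\cK$ is stable under the structure maps, given $x\in\cK(G)$ and any group homomorphism $\epsilon\colon H\rar G$, the morphism condition gives
\[
r_H(\cF^*(\epsilon)(x))=\cG^*(\epsilon)(r_G(x))=0,
\]
so $\cF^*(\epsilon)(x)\in\cK(H)$; an identical calculation works for $\cF_*$ on monomorphisms. The axioms (MFI 1)--(MFI 6) are universally quantified equations that already hold inside $\cF$, so they restrict to $\cK$ without additional work. The argument for the image is dual: if $y=r_G(x)$, then $\cG^*(\epsilon)(y)=r_H(\cF^*(\epsilon)(x))$, which visibly lies in $r_H(\cF(H))$, and similarly for induction.

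For part (b), the new content on the kernel side is that $\cK$ is closed under multiplication by $\cF$. Since $r_G$ is a ring homomorphism, for $x\in\cF(G)$ and $y\in\cK(G)$ we have $r_G(xy)=r_G(x)r_G(y)=0$, and similarly $r_G(yx)=0$, so $\cK(G)$ is a two-sided ideal of $\cF(G)$; the module axioms (MOD 1)--(MOD 3) for $\cK$ are then inherited from the regular module structure on $\cF$. For the image, the fact that $r_G$ is an algebra homomorphism shows that $r_G(\cF(G))$ is an $R$-subalgebra of $\cG(G)$, so (GFI 1) holds, and the remaining axioms are again inherited.

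For parts (c) and (d), the only real thing to check is that the quotient structure maps are well-defined. If $\cF$ is a sub-MFI of $\cG$, then $\cG^*(\epsilon)(\cF(G))\subseteq\cF(H)$ and $\cG_*(\alpha)(\cF(H))\subseteq\cF(G)$ by hypothesis, so the formulas $\cQ^*(\epsilon)=\cG^*(\epsilon)\pmod{\cF(H)}$ and $\cQ_*(\alpha)=\cG_*(\alpha)\pmod{\cF(G)}$ define $R$-module homomorphisms on $\cQ$. Axioms (MFI 1)--(MFI 6) are identities that hold in $\cG$ and descend to $\cQ$ because each side is computed by applying the well-defined maps just constructed. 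For part (d), the additional ingredient is that if $\cF$ is an ideal of $\cG$, then $\cF(G)$ is a two-sided ideal in the ring $\cG(G)$, so multiplication descends to $\cQ(G)$; Frobenius reciprocity (GFI 3) and the algebra-homomorphism axiom (GFI 2) pass to the quotient for the same reason.

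There is no genuine obstacle in any step; the verification is purely bookkeeping. The only mildly delicate point is (d), where one needs the ideal hypothesis to be two-sided in order for the multiplication on $\cQ(G)$ to be well-defined---this is implicit in the way the kernel of a morphism of GFIs appears in (b), since the kernel of an algebra homomorphism is automatically two-sided.
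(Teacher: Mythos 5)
The paper omits this proof as ``routine,'' and your verification supplies exactly the intended argument: the kernel and image are stable under the structure maps because a morphism commutes with $\cF_*,\cF^*,\cG_*,\cG^*$, the universally quantified axioms (MFI 1)--(MFI 6) and (GFI 2)--(GFI 3) restrict to sub-objects and descend to quotients once well-definedness is checked, and the ring-theoretic claims follow from each $r_G$ being an $R$-algebra homomorphism. Your closing remark is also well taken: the paper's definition only introduces \emph{left} ideals, and for the multiplication on $\cQ(G)$ in part (d) to descend one needs the ideal to be two-sided, which is automatic in the relevant case of part (b) since the kernel of an algebra homomorphism is two-sided.
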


\begin{example}\label{ex:2}
The following are some motivating examples for this work.

\begin{enumerate}[leftmargin=*,label={\upshape(\alph*)}]
\item\label{ex:kernel1} There is a GFI morphism $m'_{\C}\colon \M\rar \R_{\C}$, sending,
for every finite group $G$, a symbol $[H,\lambda]\in \M(G)$ to $\Ind_{G/H}\lambda\in \R_{\C}(G)$.
The kernel of $m'_{\C}$ was investigated by, among many others,
Langlands \cite{Lan-70}, Deligne \cite{Del-73}, Snaith \cite{Sna-88},
Boltje \cite{Boltje-EBI}, and Boltje--Snaith--Symonds \cite{BSS}.

\item\label{ex:kernel2} Let $F$ be a field. There is a GFI morphism
$m_F\colon \B\rar \R_F$, which maps, for every finite group $G$, a $G$-set $X$
to the permutation module $F[X]$ over $F$. Its kernel $\K_F$ is the MFI
of \emph{Brauer relations over $F$}. In \cite{bra1}, an explicit
description of generators of this MFI is given in the case when $F$ is a field
of characteristic 0. The primary motivation for this note
is to give a similarly explicit description when $F$ is a field of positive
characteristic.
\end{enumerate}
\end{example}

\section{Primordial groups}\label{sec:primordial}
If $S$ is a commutative $R$-algebra, and $\cF$ an MFI (respectively GFI) over $R$,
then $S\otimes_R \cF$, defined in the obvious way, is an MFI (respectively GFI)
over $S$. If $R=\Z$, then we will suppress any mention of $R$, and will just say
``$\cF$ is a MFI (respectively GFI)''. Throughout the rest of the paper, $Q$
will denote the field of fractions of $R$.
For a prime ideal $\mathfrak{p}$ of $R$, we will write
$\cF_{\mathfrak{p}}$ for $R_{\mathfrak{p}}\otimes_R \cF$, and $\cF_{Q}$ for
$Q \otimes_R \cF$.

\begin{notation}\label{not:indres}
Let $\cF$ be an MFI, and let $\cX$ be a class of groups
closed under isomorphisms. For every finite group $G$, we
define the following $R$-submodules of~$\cF(G)$:
\begin{eqnarray*}
\cI_{\cF,\cX}(G) & = & \sum_{H\leq G, \;H\in \cX}\Ind_{G/H}\cF(H),\\
\cI_{\cF}(G) & = & \sum_{H\lneq G}\Ind_{G/H}\cF(H),\\
\cK_{\cF,\cX}(G) & = & \bigcap_{H\leq G, \;H\in \cX} \ker(\Res_{G/H} \cF(G)),\\
\cK_{\cF}(G) & = & \bigcap_{H\lneq G} \ker(\Res_{G/H} \cF(G)).
\end{eqnarray*}
\end{notation}

\begin{definition}\label{def:primordial}
Let $\cF$ be an MFI and let $G$ be a finite group. We say that $G$ is
\emph{primordial} for $\cF$ if either $G$ is trivial, or
$\cF(G)\ne \cI_{\cF}(G)$. We denote the class
of all primordial groups for $\cF$ by $\cP(\cF)$.
\end{definition}
\begin{remark}\label{rem:IndThm}
Let $\cF$ be an MFI. 
\begin{enumerate}[leftmargin=*,label={\upshape(\alph*)}]
\item\label{item:inductionthm} 
Suppose that $\cX$ is a class of finite groups that is closed under isomorphisms
and under taking subgroups, with the property that for every finite group $G$, we
have $\cF(G)=\cI_{\cF,\cX}(G)$. Then it is shown in \cite[Theorem 2.1]{Thevenaz}
that $\cX$ contains the closure of $\cP(\cF)$ under taking all subgroups.

\item\label{item:1enough}
Suppose that $\cF$ is a GFI. Then it follows from axiom (GFI 3) that $G$ is
primordial for $\cF$ if and only if $1_{\cF(G)}\not\in\cI_{\cF}(G)$. It easily
follows from this and from axioms (GFI 2) and (MFI 6) that $\cP(\cF)$ is closed
under quotients.
\end{enumerate}
\end{remark}

\begin{example}\label{ex:prim}
\begin{enumerate}[leftmargin=*,label={\upshape(\alph*)}]
\item\label{item:primBurnside} Every finite group is primordial for the Burnside
ring functor $\B$, and also for $\B_{\Q}$.
Indeed, no non-zero multiple of the identity element of $\B(G)$ can be contained
in the image of induction from proper subgroups. Similarly,
every finite group is primordial for the monomial ring functor $\M$, and
also for $\M_{\Q}$.
\item Recall from Example \ref{ex:1} \ref{ex:repring} the representation
ring functor $\R_{\C}$. It follows from Brauer's induction theorem \cite[Theorem 5.6.4]{Benson}
that $\cP(\R_{\C})$ is contained in the class of elementary groups, i.e.
of direct products of finite cyclic groups by $p$-groups. Moreover, it is a
theorem of Green \cite{Green} that in fact $\cP(\R_{\C})$ consists precisely of
the elementary groups.
\item\label{item:elemprimord} Recall from Example \ref{ex:2} \ref{ex:kernel1} the GFI morphism
$m'_{\C}\colon\M\rar \R_\C$ from the monomial ring functor to the complex
representation ring functor. It follows from Brauer's induction theorem that
$(m'_{\C})_G$ is surjective for every finite group $G$, so by the previous example,
$\cP(\Im m'_{\C})$ consists precisely of the elementary groups.
\item\label{item:qelemprimord} Recall from Example \ref{ex:2} \ref{ex:kernel2}
the GFI morphism $m_{\Q}\colon \B\rar \R_{\Q}$. Let $q$ be a prime number.
Solomon's induction theorem implies that $\cP(\Im(m_\mathbb{Q})_q)$
is contained in the class of $q$-quasi-elementary groups, i.e. of
semidirect products $C\rtimes U$, with $C$ finite cyclic and $U$ a $q$-group.
Moreover, it is a theorem of Dokchitser \cite{solomon} that if $G$ is
$q$-quasi-elementary, then the trivial character of $G$ is not in the image of
induction of trivial characters from proper subgroups, so
$\cP(\Im(m_\Q)_q)$ is precisely the class of all $q$-quasi-elementary groups.
\item\label{item:cycprimord} Let $m_{\Q}$ be as above. It follows from Artin's
induction theorem \cite[Theorem 5.6.1]{Benson} that
$\cP(\Im(m_\Q)_{\Q})$ is the class of finite cyclic groups.
\item Let $p$ be a prime number, and let $m_{\F_p}\colon\B\rar \R_{\F_p}$ be
as in Example \ref{ex:2} \ref{ex:kernel2}. Dress's induction theorem
\cite[Theorem 9.4]{bra1} implies that $\cP(\Im m_{\F_p})$ is contained in the
class of all groups that are $(p,q)$-Dress groups for some prime number $q$. We
will show in Theorem \ref{thm:Dress} that the trivial representation of a
$(p,q)$-Dress group is not in the image of induction of trivial representations
from proper subgroups, so in fact, $\cP(\Im m_{\F_p})$ is precisely the class
of all finite groups that are $(p,q)$-Dress groups for some prime number $q$.
\end{enumerate}
\end{example}

\section{The primitive quotient}\label{sec:primitive}
In this section, we prove our main theorems on kernels of morphisms of GFIs.
The main results of the section are Theorem \ref{thm:noprim}, \ref{thm:whichG},
and \ref{thm:primeqR}.
\begin{lemma}\label{lem:MeqI1}
Let $m\colon \cF\rar \cG$ be a morphism of GFIs over a ring $R$ with kernel
$\cK$, and let $G$ be a finite group. Then the following are equivalent:
\begin{enumerate}[leftmargin=*,label={\upshape(\roman*)}]
\item the group $G$ is not primordial for $\Im m$;
\item\label{item:x} for each proper subgroup $H$ of $G$,
there exists an element $x_H\in \cF(H)$ such that
$x=1_{\cF(G)}+ \sum_{H \lneq G} \Ind_{G/H}(x_H)\in\cK(G)$.
\end{enumerate}
\end{lemma}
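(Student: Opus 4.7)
The plan is to pull back the primordiality condition for $\Im m$ through the morphism $m$, exploiting that $\Im m$ is itself a GFI. Everything should fall out of axioms already in place, so the proof is essentially a routine unwinding of definitions.

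First I would invoke Lemma \ref{lem:routine}(b), which guarantees that $\Im m$ is a sub-GFI of $\cG$. Applying Remark \ref{rem:IndThm}\ref{item:1enough} to $\Im m$, the group $G$ is primordial for $\Im m$ if and only if $1_{(\Im m)(G)} \notin \cI_{\Im m}(G)$. Since $m_G$ is a morphism of $R$-algebras, $1_{(\Im m)(G)} = m_G(1_{\cF(G)})$, so condition (i) is equivalent to the existence of elements $y_H \in (\Im m)(H)$ for $H \lneq G$ such that
\[
m_G(1_{\cF(G)}) \;=\; \sum_{H\lneq G} \Ind_{G/H}(y_H) \quad \text{in } \cG(G).
\]

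Next I would lift each $y_H = m_H(z_H)$ for some $z_H \in \cF(H)$, which is possible by the very definition of $\Im m$. Because $m$ is a morphism of MFIs, it commutes with $\Ind_{G/H}$, so the right-hand side above rewrites as $m_G\bigl(\sum_H \Ind_{G/H}(z_H)\bigr)$. The displayed equation therefore translates to
\[
1_{\cF(G)} - \sum_{H\lneq G}\Ind_{G/H}(z_H) \;\in\; \ker m_G \;=\; \cK(G),
\]
which is exactly (ii) with $x_H := -z_H$. For the converse, given (ii), applying the ring homomorphism $m_G$ to the element $1_{\cF(G)}+\sum_H \Ind_{G/H}(x_H)$ shows that $1_{(\Im m)(G)}$ equals $\sum_H \Ind_{G/H}(-m_H(x_H)) \in \cI_{\Im m}(G)$, witnessing that $G$ is not primordial for $\Im m$.

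I do not anticipate any genuine obstacle: the whole proof is a pair of compatible substitutions that rely on (a) the characterisation of primordiality via the unit element (Remark \ref{rem:IndThm}\ref{item:1enough}) applied to $\Im m$, and (b) the fact that morphisms of GFIs commute with induction and preserve units. The only mildly delicate point worth flagging is the convention for trivial $G$, in which case the sum indexing set is empty and both sides of the equivalence are vacuous (or trivially false, depending on whether $1_{\cG(1)} = 0$), in agreement with the convention that the trivial group is always primordial.
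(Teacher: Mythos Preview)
Your proof is correct and follows essentially the same route as the paper: both use Remark~\ref{rem:IndThm}\ref{item:1enough} to rephrase non-primordiality of $G$ for $\Im m$ as $m_G(1_{\cF(G)})\in\sum_{H\lneq G}\Ind_{G/H}m_H(\cF(H))$, then lift through $m$ using that it commutes with induction and preserves units. The paper compresses your two directions into a single chain of equivalences, but the content is identical.
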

\begin{proof}
By Remark \ref{rem:IndThm} \ref{item:1enough}, $G$ is not primordial
for $\Im m$ if and only if
$$
m_G(1_{\cF(G)})\in
\sum_{H\lneq G}\Ind_{G/H}(m_H(\cF(H)))=m_G \sum_{H\lneq G} \Ind_{G/H}(\cF(H)).
$$
This is equivalent to the existence of elements $x_H\in \cF(H)$ for $H\lneq G$
such that
$x=1_{\cF(G)}+ \sum_{H \lneq G} \Ind_{G/H}(x_H)\in \cK(G)$.
\end{proof}

\begin{definition}
Let $G$ be a finite group, let $\cF$ be a GFI over $R$, and let $\cM$ be a module
under $\cF$. Let $D(G)$ be an $R$-subalgebra of the centre of $\cF(G)$. Define
the set of \emph{$D$-imprimitive elements} of $\cM(G)$ by
$$
\Imprim_{\cM,D}(G)=D(G)\cdot \left(\sum_{H\lneq G}\Ind_{G/H}\cM(H)+
\sum_{1\neq N\normal G}\Inf_{G/N}\cM(G/N)\right).
$$
This is an $R$-submodule of $\cM(G)$. Define the \emph{$D$-primitive quotient}
of $\cM(G)$ to be the quotient of $R$-modules
$$
\Prim_{\cM,D}(G) = \cM(G)/\Imprim_{\cM,D}(G).
$$
When $D(G)$ is generated by $1_{\cF(G)}$ over $R$, we will drop it from the notation.
\end{definition}

\begin{notation}\label{not:4}
For the rest of the section, we put ourselves in the following situation.
We fix a morphism $m: \cF \rar \cG$ of GFIs over a domain $R$ with the
property that $\cF(H)$ is $R$-torsion free for all finite groups $H$,
and we let $\cK$ denote its kernel. Recall from Lemma \ref{lem:routine}
that $\cK$ is an ideal of $\cF$. Further, we fix a finite group $G$, and an
$R$-subalgebra $D(G)$ of the centre of $\cF(G)$. Assume for the rest of the
section that the $R$-module $\cF(G)$ is generated by $\cI_{\cF}(G)$ and $D(G)$.
\end{notation}

\begin{lemma}\label{lem:ideal}
Under the hypotheses of Notation \ref{not:4}, let $\cM$ be any module under
$\cF$, and let $x$ be any element of $\cM(G)$. Then the $R$-submodule of
$\cM(G)$ generated by $D(G)\cdot \cI_{\cM}(G)$ and $D(G)\cdot x$ is an
$\cF(G)$-submodule.
\end{lemma}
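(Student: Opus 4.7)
The plan is to verify directly that the $R$-submodule
\[
N \;:=\; D(G)\cdot\cI_{\cM}(G)\;+\;D(G)\cdot x
\]
of $\cM(G)$ is stable under the action of every $f\in\cF(G)$. Using the standing hypothesis of Notation \ref{not:4} that $\cF(G)$ is $R$-spanned by $\cI_{\cF}(G)$ and $D(G)$, together with the $R$-bilinearity of the action, I would reduce to the two cases $f\in D(G)$ and $f=\Ind_{G/H}(f_H)$ with $H\lneq G$ and $f_H\in \cF(H)$. By a parallel reduction on the $N$-side, it is enough to check the image of a generator $d'y$ with $d'\in D(G)$ and $y\in\Ind_{G/K}\cM(K)\cup\{x\}$ for some $K\lneq G$.

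The first case is handled purely by the algebra structure: $d\cdot(d'y)=(dd')\cdot y$ by associativity of the $\cF(G)$-action (axiom (MOD 1)), and $dd'\in D(G)$ since $D(G)$ is a subalgebra, so the result already lies in $N$. The second case is where Frobenius reciprocity for modules (axiom (MOD 3)) does the work:
\[
\Ind_{G/H}(f_H)\cdot(d'y)=\Ind_{G/H}\bigl(f_H\cdot\Res_{G/H}(d'y)\bigr)\in\Ind_{G/H}\cM(H)\subseteq\cI_{\cM}(G).
\]
Because $1_{\cF(G)}\in D(G)$, we have $\cI_{\cM}(G)\subseteq D(G)\cdot\cI_{\cM}(G)\subseteq N$, and this concludes the verification.

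There is no substantial obstacle to overcome: the lemma is essentially the observation that the two reductions made possible by the spanning hypothesis on $\cF(G)$ leave only one non-trivial computation, which is dispatched by Frobenius reciprocity. The small bookkeeping point to keep in mind is that one needs $1_{\cF(G)}\in D(G)$ (which holds because $D(G)$ is a subalgebra) in order to absorb $\cI_{\cM}(G)$ into $D(G)\cdot\cI_{\cM}(G)$. It is worth noting that the centrality of $D(G)$ in $\cF(G)$ plays no role in this particular argument, although it will presumably be used when the lemma is applied later in the section.
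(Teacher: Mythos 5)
Your proof is correct and is essentially the paper's own argument: both reduce, via the hypothesis that $\cF(G)$ is generated by $\cI_{\cF}(G)$ and $D(G)$, to the two cases $\alpha\in D(G)$ (handled by the subalgebra structure) and $\alpha=\Ind_{G/H}(f_H)$ (handled by axiom (MOD 3)). Your explicit remarks that $1_{\cF(G)}\in D(G)$ is needed to absorb $\cI_{\cM}(G)$ into $D(G)\cdot\cI_{\cM}(G)$, and that centrality of $D(G)$ is not used here, are accurate but do not change the substance.
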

\begin{proof}
Let $\Theta$ be an element of the $R$-module $D(G)\cdot \cI_{\cM}(G) + D(G)\cdot x$,
and let $\alpha\in \cF(G)$.
If $\alpha=\Ind_{G/H}y$ for some $y \in \cF(H)$, where $H$ is a proper subgroup
of $G$, then by property (MOD 3), $\alpha\cdot \Theta=\Ind_{G/H}(y\cdot
\Res_{G/H}\Theta)\in \cI_{\cM}(G)$. If, on the other hand, $\alpha\in D(G)$,
then $\alpha\cdot \Theta\in D(G)\cdot \cI_{\cM}(G)+D(G)\cdot x$ by definition.
Since $\cF(G)$ is assumed to be generated by $\cI_{\cF}(G)$ and
by $D(G)$, it follows that $\alpha\cdot \Theta\in D(G)\cdot\cI_{\cM}(G)+D(G)\cdot x$
for all $\alpha\in \cF(G)$.
\end{proof}

\begin{lemma}\label{lem:MeqI2}
Under the hypotheses of Notation \ref{not:4},
suppose that the equivalent conditions of Lemma \ref{lem:MeqI1}
are satisfied for $m$ and $G$, and let $x\in\cK(G)$ be an element of the form
$x=1_{\cF(G)}+ \sum_{H \lneq G} \Ind_{G/H}(x_H)$, where $x_H\in \cF(H)$. Then
$$
\cK(G)=D(G)\cdot \cI_{\cK}(G) + D(G) \cdot x.
$$
\end{lemma}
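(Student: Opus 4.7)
The plan is to prove the two inclusions separately. The inclusion $\supseteq$ is immediate: by Lemma \ref{lem:routine}, $\cK$ is an ideal of $\cF$, so $\cK(G)$ is an ideal of $\cF(G)$; it contains $x$ by hypothesis and $\cI_{\cK}(G)$ by definition, hence also contains $D(G)\cdot x$ and $D(G)\cdot \cI_{\cK}(G)$. All the content lies in showing $\subseteq$.

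Set $\cJ(G) = D(G)\cdot \cI_{\cK}(G) + D(G)\cdot x$. My first step would be to apply Lemma \ref{lem:ideal}, taking $\cM = \cK$ (which is a module under $\cF$ since it is an ideal) and the given $x$; this tells us that $\cJ(G)$ is an $\cF(G)$-submodule of $\cK(G)$. It is here that the standing assumption that $\cF(G)$ is generated as an $R$-module by $\cI_{\cF}(G)$ and $D(G)$ is used. Now given any $y\in \cK(G)$, the defining expression for $x$ gives
$$
1_{\cF(G)} \;=\; x \;-\; \sum_{H\lneq G}\Ind_{G/H}(x_H),
$$
so multiplying by $y$ yields
$$
y \;=\; y\cdot x \;-\; \sum_{H\lneq G}y\cdot \Ind_{G/H}(x_H).
$$
The first term $y\cdot x$ lies in $\cF(G)\cdot \cJ(G)\subseteq \cJ(G)$ because $x\in \cJ(G)$ and $\cJ(G)$ is an $\cF(G)$-submodule.

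For each summand of the second term, I would invoke Frobenius reciprocity (axiom (GFI 3)) to get
$$
y\cdot \Ind_{G/H}(x_H) \;=\; \Ind_{G/H}\bigl(\Res_{G/H}(y)\cdot x_H\bigr).
$$
Because $m$ commutes with restriction, $\Res_{G/H}(y)\in \cK(H)$; since $\cK(H)$ is an ideal of $\cF(H)$, the product $\Res_{G/H}(y)\cdot x_H$ lies in $\cK(H)$, and therefore $\Ind_{G/H}\bigl(\Res_{G/H}(y)\cdot x_H\bigr)\in \cI_{\cK}(G)$. Since $D(G)$ is a subalgebra and hence contains $1_{\cF(G)}$, we have $\cI_{\cK}(G)\subseteq D(G)\cdot \cI_{\cK}(G)\subseteq \cJ(G)$. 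Combining the two parts gives $y\in \cJ(G)$, completing the inclusion.

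There is no real obstacle here; the argument is essentially a bookkeeping exercise using Frobenius reciprocity and the fact that $\cK$ is an ideal closed under restriction. The only points requiring care are verifying that $\cJ(G)$ absorbs $\cF(G)$-multiplication (handled once and for all by Lemma \ref{lem:ideal}) and that $\cI_{\cK}(G)$ itself sits inside $D(G)\cdot \cI_{\cK}(G)$, which uses only that $D(G)$ contains the identity.
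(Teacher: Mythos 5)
Your argument is correct and is essentially identical to the paper's proof: both decompose $y$ as $y\cdot x$ plus a correction term, handle $y\cdot x$ via Lemma \ref{lem:ideal} applied to $\cM=\cK$, and push the correction term into $\cI_{\cK}(G)$ using Frobenius reciprocity. The extra details you supply (the $\supseteq$ inclusion, $\Res_{G/H}(y)\in\cK(H)$, and $1_{\cF(G)}\in D(G)$) are points the paper leaves implicit.
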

\begin{proof}
Let $I=D(G)\cdot \cI_{\cK}(G) + D(G) \cdot x\subseteq \cK(G)$.
We claim that $\cK(G)\subseteq I$.
Let $y\in \cK(G)$. Lemma \ref{lem:ideal} implies that $I$ is an ideal of
$\cF(G)$. Since we have $x\in D(G)\cdot x\subseteq I$, it follows that
$y\cdot x\in I$. Also,
$$
y\cdot x-y = \sum_{H \lneq G} y\cdot \Ind_{G/H}(x_H)=
\sum_{H \lneq G} \Ind_{G/H}(\Res_{G/H}(y) \cdot x_H)
$$
is in $\cI_{\cK}(G)$, and therefore in $I$. It follows that
$y=y\cdot x +(y-y\cdot x)\in I$. Thus $\cK(G)\subseteq I$, and the proof is complete.
\end{proof}

\begin{theorem}\label{thm:noprim}
Under the hypotheses of Notation \ref{not:4},
suppose that there is a non-trivial normal subgroup $N$ of $G$ such that
$G/N$ is not primordial for $\Im m$. Then $\Prim_{\cK,D}(G)$ is trivial.
\end{theorem}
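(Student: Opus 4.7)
The plan is to produce an element $x \in \cK(G)$ of the form required by Lemma \ref{lem:MeqI1}(ii) that additionally lies in $\Imprim_{\cK,D}(G)$, and then invoke Lemma \ref{lem:MeqI2} to conclude that all of $\cK(G)$ is $D$-imprimitive.

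First I would apply the hypothesis to the quotient. Since $G/N$ is not primordial for $\Im m$, Lemma \ref{lem:MeqI1} (applied to the group $G/N$) provides elements $y_H \in \cF(H)$, one for each proper subgroup $H \lneq G/N$, such that
\[
x' \;=\; 1_{\cF(G/N)} + \sum_{H \lneq G/N} \Ind_{(G/N)/H}(y_H) \;\in\; \cK(G/N).
\]
Next I would inflate $x'$ up to $G$. For each $H \lneq G/N$, let $\tilde{H}$ be the preimage of $H$ in $G$, which is a proper subgroup of $G$ since $H$ is proper in $G/N$. The commutative square
\[
\xymatrix{
\tilde{H}\ar@{->>}[d]\ar@{^{(}->}[r] & G\ar@{->>}[d]\\
H\ar@{^{(}->}[r] & G/N
}
\]
together with axiom (MFI 6) gives $\Inf_{G/N}\Ind_{(G/N)/H}(y_H) = \Ind_{G/\tilde{H}}\Inf_{\tilde{H}/N}(y_H)$. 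Since morphisms of MFIs commute with inflation, setting $x = \Inf_{G/N}(x') \in \cF(G)$ yields $x \in \cK(G)$, and
\[
x \;=\; 1_{\cF(G)} + \sum_{H \lneq G/N} \Ind_{G/\tilde{H}}\bigl(\Inf_{\tilde{H}/N}(y_H)\bigr),
\]
which is exactly the shape prescribed in Lemma \ref{lem:MeqI1}(ii). In particular, $G$ itself is not primordial for $\Im m$, so the hypotheses of Lemma \ref{lem:MeqI2} are in force.

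By Lemma \ref{lem:MeqI2}, $\cK(G) = D(G)\cdot \cI_{\cK}(G) + D(G)\cdot x$. The first summand is contained in $\Imprim_{\cK,D}(G)$ by definition. For the second, observe that $N \neq 1$ by assumption, so $x = \Inf_{G/N}(x') \in \Inf_{G/N}\cK(G/N)$ lies in $\sum_{1 \neq N' \normal G} \Inf_{G/N'}\cK(G/N')$, and thus $D(G)\cdot x \subseteq \Imprim_{\cK,D}(G)$. Therefore $\cK(G) = \Imprim_{\cK,D}(G)$ and $\Prim_{\cK,D}(G)$ is trivial.

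The only step that requires care is the compatibility between induction and inflation: one must make sure to phrase the lifting of the relation from $G/N$ to $G$ so that axiom (MFI 6) applies cleanly, and so that the resulting expression simultaneously witnesses non-primordiality of $G$ (via Lemma \ref{lem:MeqI1}) and manifest membership in $\Inf_{G/N}\cK(G/N)$. Everything else is formal manipulation with the ideal structure supplied by Lemma \ref{lem:ideal}.
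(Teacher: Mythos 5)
Your proof is correct and follows essentially the same route as the paper's: apply Lemma \ref{lem:MeqI1} to the quotient $G/N$, inflate the resulting relation to $G$, observe that the inflated element lies in $\Imprim_{\cK,D}(G)$ because $N\neq 1$, and conclude via Lemma \ref{lem:MeqI2}. The only difference is that you spell out the (MFI 6) computation showing the inflated element has the shape required by Lemma \ref{lem:MeqI2}, a verification the paper leaves implicit.
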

\begin{proof}
By Lemma \ref{lem:MeqI1}, applied to the quotient $G/N$,
there exists an element
$z=1_{\cF(G/N)}+ \sum_{H/N \lneq G/N} \Ind_{(G/N)/(H/N)}(x_H)\in \cK(G/N)$. Since
$N$ is non-trivial, the inflation $x=\Inf_{G/N}z$ is contained in $\Imprim_{\cK,D}(G)$.
It follows from Lemma \ref{lem:MeqI2} that
$\cK(G)=D(G)\cdot \cI_{\cK}(G) + D(G) \cdot x
\subseteq\Imprim_{\cK,D}(G)$, as claimed.
\end{proof}
\begin{theorem}\label{thm:whichG}
Under the hypotheses of Notation \ref{not:4},
suppose that $G$ is non-trivial, and that $\Prim_{\cK,D}(G)$ is non-trivial.
Then $G$ is an extension of the
form $1\rightarrow S^d\rightarrow G \rightarrow H \rightarrow1$, where $S$ is a
finite simple group, and $H$ is primordial for $\Im m$.
\end{theorem}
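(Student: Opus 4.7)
The proof is essentially a contrapositive of Theorem \ref{thm:noprim} combined with a standard structural fact about minimal normal subgroups of finite groups. The plan is as follows.

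First I would extract what the hypothesis $\Prim_{\cK,D}(G) \neq 0$ tells us through Theorem \ref{thm:noprim}. The contrapositive of that theorem asserts: if $\Prim_{\cK,D}(G)$ is non-trivial, then for \emph{every} non-trivial normal subgroup $N \trianglelefteq G$, the quotient $G/N$ is primordial for $\Im m$. This is the single input about $G$ that we need; all that remains is to exhibit a particular such $N$ whose shape is $S^d$ with $S$ simple.

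Next I would invoke the classical fact from finite group theory that every non-trivial finite group admits a minimal normal subgroup, and that any such minimal normal subgroup is characteristically simple, hence isomorphic to $S^d$ for some finite simple group $S$ and some $d \ge 1$. Since $G$ is non-trivial by hypothesis, pick such a minimal normal subgroup $N \cong S^d$ of $G$.

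Finally I would split into two cases depending on whether $N$ is proper. If $N \lneq G$, then $N$ is non-trivial and $G/N$ is a non-trivial proper quotient, so by the first observation $H := G/N$ is primordial for $\Im m$, and the exact sequence $1 \to S^d \to G \to H \to 1$ is the desired extension. If instead $N = G$, then $G$ itself is characteristically simple, so $G \cong S^d$; in this case we take $H$ to be the trivial group, which is primordial for $\Im m$ by convention in Definition \ref{def:primordial}, and the extension $1 \to S^d \to G \to 1 \to 1$ again has the required form.

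There is no real obstacle here: the non-trivial input (Theorem \ref{thm:noprim}) has already been established, and the group-theoretic ingredient is standard. The only point of care is the boundary case $N = G$, which is handled by the convention that the trivial group counts as primordial.
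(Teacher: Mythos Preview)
Your proof is correct and follows essentially the same approach as the paper's: pick a minimal normal subgroup (equivalently, the bottom term of a chief series) to obtain $N\cong S^d$, then apply the contrapositive of Theorem~\ref{thm:noprim} to conclude that $G/N$ is primordial for $\Im m$. Your explicit treatment of the boundary case $N=G$ via the convention in Definition~\ref{def:primordial} is a nice touch that the paper leaves implicit.
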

\begin{proof}
By the existence of chief series, there exists a normal subgroup of $G$ that
is isomorphic to $S^d$, where $S$ is a finite simple group, and $d\geq 1$ is an
integer. By Theorem \ref{thm:noprim}, the quotient $G/S^d$ is primordial for $\Im m$.
\end{proof}
\begin{assumption}\label{ass:4}
In addition to the assumptions of Notation \ref{not:4}, we now assume that:
\begin{itemize}[leftmargin=*]
\item the ring $R$ is a Euclidean domain;
\item for every normal subgroup $N$ of $G$, the inflation map
$\Inf_{G/N}\colon \cF(G/N)\rar \cF(G)$ is injective;
\item\label{item:gen}
for every quotient $G/N$, the $R$-module $\cF(G/N)$ is generated by $\cI_{\cF}(G/N)$
and $1$. In particular, the subalgebra $D(G)$ will be assumed to be generated by
$1_{\cF(G)}$ over $R$, and will now be dropped from the notation.
\end{itemize}
\end{assumption}

\begin{theorem} \label{thm:primeqR}
Under the hypotheses of Notation \ref{not:4} and Assumption \ref{ass:4}, suppose
that $G$ is primordial for $\cF_Q$ and not primordial for $\Im m$. Let $\fa$
be the ideal of $R$ generated by all those $a\in R$ for which there exists
a proper quotient $G/N$ and an element $a1_{\cF(G/N)}+y\in \cK(G/N)$ with
$y\in \cI_{\cF}(G/N)$. Then $\Prim_{\cK}(G)$ is isomorphic to $R/\fa$
and is generated by the image of any element of the form
$x=1_{\cF(G)}+\sum_{H\lneq G} \Ind_{G/H}x_H\in\cK(G)$.
\end{theorem}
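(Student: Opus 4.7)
The plan is to combine Lemma~\ref{lem:MeqI2} with a carefully chosen augmentation map $\pi\colon\cF(G)\rar R$ to reduce the theorem to an equality of ideals in $R$. By Lemma~\ref{lem:MeqI2}, for any element $x\in\cK(G)$ of the prescribed form we have $\cK(G)=\cI_{\cK}(G)+R\cdot x$, and since $\cI_{\cK}(G)\subseteq\Imprim_{\cK}(G)$, the class $[x]$ generates $\Prim_{\cK}(G)$ as an $R$-module. Thus $r\mapsto r[x]$ defines a surjection $\psi\colon R\twoheadrightarrow\Prim_{\cK}(G)$, and the theorem reduces to proving $\ker\psi=\fa$.

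The augmentation $\pi$ is constructed as follows. By Assumption~\ref{ass:4}, $\cF(G)=\cI_{\cF}(G)+R\cdot 1_{\cF(G)}$; the primordiality of $G$ for $\cF_{Q}$, combined with $R$-torsion-freeness of $\cF(G)$, forces this sum to be direct, since any $R$-multiple of $1_{\cF(G)}$ lying in $\cI_{\cF}(G)$ would give, after clearing denominators, an element of $\cI_{\cF_Q}(G)\cap Q\cdot 1$. Let $\pi$ be the projection onto the $R\cdot 1_{\cF(G)}$ summand. Two properties of $\pi$ will be used: first, $\pi(x)=1$ by the form of $x$; second, for any proper quotient $G/N$ and any $j\in\cF(G/N)$ decomposed (using Assumption~\ref{ass:4} applied to $G/N$) as $j=a\cdot 1_{\cF(G/N)}+y$ with $y\in\cI_{\cF}(G/N)$, one has $\pi(\Inf_{G/N}(j))=a$. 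This last equality uses that $\Inf_{G/N}$ is an algebra map (so sends $1$ to $1$) and that axiom (MFI~6) yields $\Inf_{G/N}(\cI_{\cF}(G/N))\subseteq\cI_{\cF}(G)$.

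For the inclusion $\fa\subseteq\ker\psi$, I would take a generator $a\in\fa$ witnessed by $z=a\cdot 1_{\cF(G/N)}+y\in\cK(G/N)$ with $N\neq 1$. Since $m$ is a morphism of GFIs, $\Inf_{G/N}(z)\in\cK(G)$. Applying Lemma~\ref{lem:MeqI2} to $\cK(G)$ gives $\Inf_{G/N}(z)=u+r'x$ for some $u\in\cI_{\cK}(G)$ and $r'\in R$; evaluating $\pi$ on both sides forces $r'=a$, so $a\cdot x=\Inf_{G/N}(z)-u\in\Inf_{G/N}\cK(G/N)+\cI_{\cK}(G)\subseteq\Imprim_{\cK}(G)$. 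For the converse $\ker\psi\subseteq\fa$, if $rx\in\Imprim_{\cK}(G)$, I would write $rx=\sum_{H\lneq G}\Ind_{G/H}(k_H)+\sum_{N\neq 1}\Inf_{G/N}(j_N)$ with $k_H\in\cK(H)$, $j_N\in\cK(G/N)$, and apply $\pi$. The induction terms vanish since they lie in $\cI_{\cF}(G)$, while each $\Inf_{G/N}(j_N)$ contributes an $a_N\in R$ arising from a decomposition $j_N=a_N\cdot 1_{\cF(G/N)}+y_N$ with $y_N\in\cI_{\cF}(G/N)$; since $j_N\in\cK(G/N)$, each such $a_N$ is a generator of $\fa$, and therefore $r=\pi(rx)=\sum_N a_N\in\fa$. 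The main technical hurdle is the correct construction of $\pi$ and the verification that it interacts well with inflation from proper quotients---both of which rest squarely on the primordiality hypothesis on $G$, on the $R$-torsion-freeness of $\cF$, and on axiom (MFI~6).
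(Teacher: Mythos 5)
Your proposal is correct and follows essentially the same route as the paper: Lemma~\ref{lem:MeqI2} shows the class of $x$ generates $\Prim_{\cK}(G)$, and the computation of the annihilator rests on the unique decomposition $a\cdot 1_{\cF(G)}+y$ with $y\in\cI_{\cF}(G)$ (your projection $\pi$), which is exactly what the paper derives from the primordiality of $G$ for $\cF_Q$, the torsion-freeness, and axiom (MFI~6). Your write-up merely makes explicit the two-inclusion argument that the paper compresses into ``we deduce that the annihilator $\fa$ of $x+\Imprim_{\cK}(G)$ is generated by\dots''.
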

\begin{proof}
By Lemma \ref{lem:MeqI2}, the quotient $\Prim_{\cK}(G)$ is generated by any
$x\in \cK(G)$ of the form $x=1_{\cF(G)}+\sum_{H\lneq G}\Ind_{G/H}x_H$, where
$x_H\in \cF(H)$. Since by assumption $G$ is primordial for $\cF_Q$, Remark
\ref{rem:IndThm} \ref{item:1enough} implies that $ax\not\in \cI_{\cK}(G)$ for
any non-zero $a\in R$. It also follows from the same remark and from the
assumptions \ref{not:4} and \ref{ass:4} that any element of $\cK(G)$ can be
uniquely written as $a1_{\cF(G)}+y$, where $a\in R$ and $y\in \cI_{\cF}(G)$,
and analogously for any element of $\cK(G/N)$ for every normal subgroup $N$ of
$G$. We deduce that the annihilator $\fa\subseteq R$ of $x+\Imprim_{\cK}(G)\in
\Prim_{\cK}(G)$ is generated, as an $R$-module, by all those $a\in R$ for
which there exists a non-trivial normal subgroup $N$ of $G$ and an element
$a1_{\cF(G/N)}+y\in \cK(G/N)$, where $y\in \cI_{\cF}(G/N)$. Moreover,
we then have $\Prim_{\cK}(G)\cong R/\fa$, as claimed.
\end{proof}

\begin{corollary}\label{cor:primR}
Under the hypotheses of Theorem \ref{thm:primeqR}, if all proper quotients
of $G$ are primordial for $(\Im m)_Q$, then $\Prim_{\cK}(G)$ is isomorphic
to~$R$.
\end{corollary}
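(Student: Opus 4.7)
The plan is to reduce everything to Theorem \ref{thm:primeqR}: since its hypotheses are exactly those of this corollary (plus the added assumption on proper quotients), we already know that $\Prim_{\cK}(G) \cong R/\fa$, where $\fa$ is the ideal generated by the set of $a \in R$ for which there exists a non-trivial normal subgroup $N \lhd G$ and an element $y \in \cI_{\cF}(G/N)$ with $a \cdot 1_{\cF(G/N)} + y \in \cK(G/N)$. Thus the corollary reduces to showing that $\fa = 0$ under the extra primordiality hypothesis.

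I would argue by contradiction. Assume there is a non-zero $a \in R$ together with a non-trivial $N \lhd G$ and $y \in \cI_{\cF}(G/N)$ such that $a \cdot 1_{\cF(G/N)} + y$ lies in $\cK(G/N) = \ker m_{G/N}$. Applying $m_{G/N}$, I get the identity $a \cdot 1_{\Im m(G/N)} = -m_{G/N}(y)$ inside $\Im m(G/N)$. Because $m$ is a morphism of GFIs, it commutes with induction, so $m_{G/N}$ carries $\cI_{\cF}(G/N)$ into $\cI_{\Im m}(G/N)$; hence $a \cdot 1_{\Im m(G/N)} \in \cI_{\Im m}(G/N)$.

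Now I base-change to the field of fractions $Q$. The image in $(\Im m)_Q(G/N)$ of the above relation shows that $a \cdot 1_{(\Im m)_Q(G/N)} \in \cI_{(\Im m)_Q}(G/N)$. Since $a \neq 0$ in the domain $R$, it is a unit in $Q$, so multiplying by $a^{-1}$ yields $1_{(\Im m)_Q(G/N)} \in \cI_{(\Im m)_Q}(G/N)$. By Remark \ref{rem:IndThm}\ref{item:1enough}, this says that $G/N$ is not primordial for $(\Im m)_Q$, contradicting the hypothesis that every proper quotient of $G$ is primordial for $(\Im m)_Q$. Hence no such $a$ exists and $\fa = 0$, which gives $\Prim_{\cK}(G) \cong R$ as required.

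There is no real obstacle here; the only thing to be slightly careful about is that ``proper quotient'' is allowed to be trivial (when $N = G$), but the trivial group is primordial by definition of $\cP$, so this case creates no issue. All of the work sits in Theorem \ref{thm:primeqR}; this corollary is essentially a translation of ``$\fa$ is $0$'' into the primordiality language afforded by that remark.
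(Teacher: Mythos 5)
Your proposal is correct and is essentially the paper's own argument, just written out in full: the paper likewise observes that primordiality of every proper quotient $G/N$ for $(\Im m)_Q$ forces, via Remark \ref{rem:IndThm}~\ref{item:1enough}, that no non-zero $a$ can occur in the generating set of $\fa$, so $\fa=0$ and Theorem \ref{thm:primeqR} gives $\Prim_{\cK}(G)\cong R$. Your extra step of pushing the relation through $m$ and inverting $a$ over $Q$ is exactly the content implicitly invoked there.
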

\begin{proof}
Since all proper quotients $G/N$ are primordial for $(\Im m)_Q$, Remark
\ref{rem:IndThm} \ref{item:1enough} implies that the ideal $\fa$ of Theorem
\ref{thm:primeqR} is zero.
\end{proof}

\begin{corollary}\label{cor:primCp}
Under the hypotheses of Theorem \ref{thm:primeqR}, suppose that there exists a
prime ideal $\fp$ of $R$ such that for every prime ideal $\fq\neq \fp$
there exists a proper quotient of $G$ that is not primordial for $(\Im m)_{\fq}$.
Then $\Prim_{\cK}(G)\cong R/\fp^n$, where $n$ is the smallest non-negative
integer for which there exists a proper quotient $G/N$ and an element
$a1_{\cF(G/N)}+y\in \cK(G/N)$ with $a\in \fp^n\setminus\{0\}$ and $\Inf_{G/N}y\in \cI_{\cF}(G)$.
\end{corollary}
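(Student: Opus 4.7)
The plan is to apply Theorem \ref{thm:primeqR} to reduce the corollary to identifying the ideal $\fa$ defined there, and then to show that the hypothesis forces $\fa = \fp^n$ for the $n$ in the statement.

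The first step is to verify that the two descriptions of the generating set of $\fa$ agree---the one in Theorem \ref{thm:primeqR}, requiring $y\in \cI_\cF(G/N)$, and the one in the corollary, requiring only $\Inf_{G/N}y\in \cI_\cF(G)$. Given $a\cdot 1_{\cF(G/N)}+y\in \cK(G/N)$ with $\Inf_{G/N}y\in \cI_\cF(G)$, Assumption \ref{ass:4} lets me write $y=b\cdot 1_{\cF(G/N)}+y_0$ with $y_0\in \cI_\cF(G/N)$. Inflating and using (MFI 6), which guarantees $\Inf_{G/N}y_0\in \cI_\cF(G)$, the hypothesis on $\Inf_{G/N}y$ then forces $b\cdot 1_{\cF(G)}\in \cI_\cF(G)$; since $G$ is primordial for $\cF_Q$ by Remark \ref{rem:IndThm}\ref{item:1enough}, and $\cF(G)$ is $R$-torsion free, this forces $b=0$, and hence $y\in \cI_\cF(G/N)$.

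The second step is to show $\fa\not\subseteq \fq$ for every prime $\fq\neq \fp$. By hypothesis there is a proper quotient $G/N$ of $G$ that is not primordial for $(\Im m)_\fq$. Lemma \ref{lem:MeqI1}, applied to the localised morphism $m_\fq\colon \cF_\fq\rar \cG_\fq$ (whose kernel is $\cK_\fq$ by the exactness of localisation over a domain), yields an element $1_{\cF_\fq(G/N)}+\sum \Ind\tilde x_{H/N}\in \cK_\fq(G/N)$. Clearing denominators produces $T\in R\setminus \fq$ and $y'\in \cI_\cF(G/N)$ with $T\cdot 1_{\cF(G/N)}+y'\in \cF(G/N)$ whose image in $\cG_\fq(G/N)$ vanishes; multiplying by some $r\in R\setminus \fq$ that annihilates that image already inside $\cG(G/N)$ then gives $s:=rT\in R\setminus \fq$ and an element $s\cdot 1_{\cF(G/N)}+y\in \cK(G/N)$ with $y\in \cI_\cF(G/N)$. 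Hence $s\in \fa$ with $s\notin \fq$, as desired.

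Finally, since $R$ is a Euclidean domain and thus a principal ideal domain, $\fa=(c)$ is principal; by the second step the only prime divisor of $c$ (up to units) is a generator $\pi$ of $\fp$, so $\fa=\fp^n$ for $n=v_\fp(c)$, which by the first step is exactly the integer described in the statement. Combined with Theorem \ref{thm:primeqR}, this yields $\Prim_\cK(G)\cong R/\fp^n$. The main obstacle I foresee is the first step---reconciling the corollary's weaker condition $\Inf_{G/N}y\in \cI_\cF(G)$ with Theorem \ref{thm:primeqR}'s stronger $y\in \cI_\cF(G/N)$; once that direct-sum style argument is in place, the remainder is standard PID book-keeping together with exactness of localisation.
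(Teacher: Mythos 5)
Your proof is correct and follows essentially the same route as the paper: apply Lemma \ref{lem:MeqI1} to the localised morphism $\cF_\fq\rar\cG_\fq$ at each prime $\fq\neq\fp$ to produce an element of $\fa\setminus\fq$, then conclude $\fa=\fp^n$ since $R$ is a PID. Your first step (reconciling the condition $\Inf_{G/N}y\in\cI_\cF(G)$ in the corollary with $y\in\cI_\cF(G/N)$ in Theorem \ref{thm:primeqR}) and your explicit denominator-clearing are details the paper's two-line proof leaves implicit, and both are handled correctly.
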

\begin{proof}
Let $\fq\neq \fp$ be a prime ideal of $R$.
By Lemma \ref{lem:MeqI1}, applied to the map $\cF_{\fq}\rar \cG_{\fq}$
and to a proper quotient $G/N\not\in \cP((\Im m)_{\fq})$, there exists
$a\in \fa$ that is not in $\fq$, where $\fa$ is the ideal of Theorem \ref{thm:primeqR}.
Since $R$ is a Euclidean domain, this implies that $\fa=\fp^n$ for some
integer $n\geq 0$.
\end{proof}

\begin{corollary}\label{cor:primtriv}
Under the hypotheses of Theorem \ref{thm:primeqR}, suppose that
for every non-zero prime ideal $\fp$ of $R$ there exists a proper quotient
of $G$ that is not primordial for $(\Im m)_{\fp}$. Then $\Prim_{\cK}(G)$ is
trivial.
\end{corollary}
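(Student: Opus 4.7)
The plan is to invoke Theorem \ref{thm:primeqR}, which identifies $\Prim_{\cK}(G)$ with $R/\fa$ for the ideal $\fa\subseteq R$ described there, and then to show that the hypothesis forces $\fa=R$. Since $R$ is a Euclidean domain, hence a principal ideal domain, any proper ideal of $R$ is contained in some maximal ideal, and every maximal ideal of a PID is a non-zero prime. Therefore it suffices to verify that $\fa\not\subseteq \fp$ for every non-zero prime ideal $\fp$ of $R$.

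To do this I would fix such a $\fp$ and use the assumption to select a proper quotient $G/N$ of $G$ that is not primordial for $(\Im m)_{\fp}$. Applying Lemma \ref{lem:MeqI1} to the localised morphism $m_{\fp}\colon \cF_{\fp}\rar \cG_{\fp}$ and to the group $G/N$ produces elements $\tilde x_{H}\in \cF_{\fp}(H/N)$, for each $H/N\lneq G/N$, satisfying
$$
1_{\cF_{\fp}(G/N)}+\sum_{H/N\lneq G/N}\Ind_{(G/N)/(H/N)}\tilde x_{H}\in \cK_{\fp}(G/N).
$$
Because only finitely many $\tilde x_H$ appear and $\cF$ is $R$-torsion free, so that the natural map $\cF(U)\rar \cF_{\fp}(U)$ is injective for every finite group $U$, a single common denominator $s\in R\setminus \fp$ clears all the $\tilde x_H$ simultaneously. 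Multiplying the displayed identity by $s$ then yields an honest element $s\cdot 1_{\cF(G/N)}+y\in \cK(G/N)$ with $y\in \cI_{\cF}(G/N)$. By the very definition of $\fa$, this places $s$ in $\fa\setminus\fp$, as required.

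The only real sticking point is this clearing-of-denominators step, but it essentially repeats the move already used in the proof of Corollary \ref{cor:primCp}, and requires no new technical ingredient beyond $R$-torsion-freeness of $\cF$ and the compatibility of induction with localisation. Once the sub-claim is in place, the ideal $\fa$ is contained in no non-zero prime of $R$, hence equals $R$, and so $\Prim_{\cK}(G)\cong R/R$ is trivial, as claimed.
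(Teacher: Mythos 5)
Your proof is correct and takes essentially the same route as the paper's: for each non-zero prime $\fp$, apply Lemma \ref{lem:MeqI1} to the localised morphism $\cF_{\fp}\rar\cG_{\fp}$ and a proper quotient not primordial for $(\Im m)_{\fp}$ to produce an element of $\fa\setminus\fp$, then conclude $\fa=R$ from the fact that $R$ is a Euclidean domain. The only difference is that you spell out the denominator-clearing step that the paper leaves implicit.
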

\begin{proof}
Let $\fp$ be any non-zero prime ideal. By Lemma \ref{lem:MeqI1}, applied to
the map $\cF_{\fp}\rar \cG_{\fp}$
and to a proper quotient $G/N\not\in \cP((\Im m)_{\fp})$,
there exists $a\in \fa$ that is not in $\fp$, where $\fa$
is the ideal of Theorem \ref{thm:primeqR}. Since $R$ is
a Euclidean domain, it follows that $1\in \fa$.
\end{proof}

\section{Applications}\label{sec:appli}

In this section we explicate the results of Section \ref{sec:primitive} in the
case of monomial relations and of Brauer relations. The main new results are
on Brauer relations in positive characteristic, but we also show how to derive
some known results on monomial relations and on Brauer relations
in characteristic 0 from the formalism of GFIs. In particular,
we prove Theorems \ref{thm:main0}, \ref{thm:mainp}, and \ref{thm:indthm}
from the introduction. The following result, although not explicitly stated,
is proved in \cite{Del-73} along the way to a complete classification of monomial
relations in soluble groups.

\begin{theorem}[Deligne--Langlands, \cite{Del-73}]\label{thm:Del}
Let $\K'_{\bC}$ be the kernel of the morphism of GFIs $m_{\bC}'\colon \M\rar \R_{\bC}$
as in Example \ref{ex:2} \ref{ex:kernel1}. Let $G$ be a finite group that has
a non-trivial normal subgroup $N$ such that $G/N$ is not elementary. Let
$D(G)$ be generated over $\Z$ by symbols $[G,\lambda]$, as $\lambda$ runs over
isomorphism classes of 1-dimensional representations of $G$. Then
$\Prim_{\K'_{\bC},D}(G)$ is trivial.
\end{theorem}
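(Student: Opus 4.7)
The plan is to verify the hypotheses of Notation \ref{not:4} for the morphism $m'_{\bC}\colon \M\rar \R_{\bC}$, the finite group $G$, and the subalgebra $D(G)$, and then to apply Theorem \ref{thm:noprim} to the normal subgroup $N$ furnished by the hypothesis.

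First I would dispense with the routine requirements. The ground ring is $R=\Z$, and $\M(H)$ is $\Z$-torsion free for every finite group $H$ since it is free abelian on conjugacy classes of symbols. To check that $D(G)$ is a central $\Z$-subalgebra of $\M(G)$, note that for any 1-dimensional character $\lambda$ of $G$ and any $[K,\chi]\in\M(G)$ the double coset space $G\backslash G/K$ is a singleton, so the multiplication formula of Example \ref{ex:1}\ref{ex:monomial} yields $[G,\lambda]\cdot [K,\chi]=[K,\Res_{G/K}\lambda\cdot \chi]=[K,\chi]\cdot [G,\lambda]$; moreover $[G,\lambda]\cdot [G,\mu]=[G,\lambda\mu]\in D(G)$, and the trivial character of $G$ provides the unit element. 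The remaining condition, that $\M(G)$ be generated as a $\Z$-module by $\cI_{\M}(G)$ and $D(G)$, follows from the fact that $\M(G)$ is spanned by the symbols $[H,\lambda]$: for the inclusion $\alpha\colon H\hookrightarrow G$ one has $\M_*(\alpha)([H,\lambda])=[H,\lambda]$, so every such symbol with $H\lneq G$ lies in $\cI_{\M}(G)$, while the remaining generators $[G,\lambda]$ all lie in $D(G)$ by definition.

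Having verified the hypotheses of Notation \ref{not:4}, I would invoke Example \ref{ex:prim}\ref{item:elemprimord}, which identifies $\cP(\Im m'_{\bC})$ with the class of elementary groups. Since $G/N$ is not elementary by assumption, it is not primordial for $\Im m'_{\bC}$, and Theorem \ref{thm:noprim} applied to $m'_{\bC}$, $G$, $D(G)$, and $N$ immediately gives that $\Prim_{\K'_{\bC},D}(G)$ is trivial. There is no substantive obstacle beyond carefully unwinding definitions: the theorem is a direct specialisation of the abstract Theorem \ref{thm:noprim} to this setting.
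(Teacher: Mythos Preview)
Your proposal is correct and follows precisely the paper's own approach: verify the hypotheses of Notation \ref{not:4} for $m'_{\bC}$, $G$, and $D(G)$, then apply Theorem \ref{thm:noprim} using the identification $\cP(\Im m'_{\bC})=\{\text{elementary groups}\}$ from Example \ref{ex:prim}\ref{item:elemprimord}. The only difference is that you spell out the verification of the centrality of $D(G)$ and the generation of $\M(G)$ by $\cI_{\M}(G)$ and $D(G)$, whereas the paper simply asserts that these ``easily follow''.
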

\begin{proof}
By Example \ref{ex:prim} \ref{item:elemprimord}, the primordial groups for
$\Im m_{\bC}'$ are precisely the elementary groups, and every group is primordial
for $\M_{\Q}$ (see Example \ref{ex:prim} \ref{item:primBurnside}). It easily follows
that the assumptions of Notation \ref{not:4} are satisfied for this morphism of GFIs
and this choice of $D(G)$. The result therefore follows from Theorem \ref{thm:noprim}.
\end{proof}

\begin{theorem}[Bartel--Dokchitser, \cite{bra1}]
Let $\K_{\Q}$ be the kernel of the morphism of GFIs $m_{\bQ}\colon\B\rar \R_{\bQ}$
as in Example \ref{ex:2} \ref{ex:kernel2}, and let $G$ be a finite group that is
not quasi-elementary. Then:
\begin{enumerate}[leftmargin=*,label={\upshape(\alph*)}]
\item if all proper quotients of $G$ are cyclic, then $\Prim_{\K_{\Q}}(G)\cong \Z$;
\item if $q$ is a prime number such that all proper quotients of $G$ are $q$-quasi-elementary,
and at least one of them is not cyclic, then $\Prim_{\K_{\Q}}(G)\cong \Z/q\Z$;
\item if there exists a proper quotient of $G$ that is not quasi-elementary,
or if there exist distinct prime numbers $q_1$ and $q_2$ and, for $i=1$ and $2$,
a proper quotient of $G$ that is non-cyclic $q_i$-quasi-elementary, then
$\Prim_{\K_{\Q}}(G)$ is trivial.
\end{enumerate}
Moreover, in all cases, $\Prim_{\K_{\Q}}(G)$ is generated by any element of
$\K_{\Q}(G)\subseteq \B(G)$ of the form $[G/G]+\sum_{H\lneq G}a_H[G/H]$, $a_H\in \Z$.
\end{theorem}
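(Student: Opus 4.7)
The plan is to deduce the theorem by specialising Theorem \ref{thm:primeqR} and its corollaries to the morphism $m_\Q\colon \B \to \R_\Q$ over $R = \Z$. The first step is to verify the hypotheses of Notation \ref{not:4} and Assumption \ref{ass:4}. The Burnside ring $\B(H)$ is a free $\Z$-module on isomorphism classes of transitive $H$-sets, hence is $\Z$-torsion free; inflation along an epimorphism $G\to G/N$ is injective because it sends each basis element $[(G/N)/(K/N)]$ to $[G/K]$ for $N\le K\le G$; and for any quotient $G/N$, the identity element $1=[(G/N)/(G/N)]$ together with $\cI_\B(G/N)$ generates $\B(G/N)$ as a $\Z$-module. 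Example \ref{ex:prim} \ref{item:primBurnside} ensures that every group is primordial for $\B_\Q$, so the requirement that $G$ be primordial for $\cF_Q=\B_\Q$ is automatic. By Example \ref{ex:prim} \ref{item:qelemprimord} (respectively Example \ref{ex:prim} \ref{item:cycprimord}), a group is primordial for $(\Im m_\Q)_q$ (respectively for $(\Im m_\Q)_\Q$) exactly when it is $q$-quasi-elementary (respectively cyclic); since the non-quasi-elementary $G$ is neither, and since $\Im m_\Q(G)$ is a finitely generated $\Z$-module, $G$ is not primordial for $\Im m_\Q$ itself. Thus Theorem \ref{thm:primeqR} applies.

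Parts (a) and (c) now follow quickly. For (a), the hypothesis combined with Example \ref{ex:prim} \ref{item:cycprimord} (Artin's induction theorem) shows that all proper quotients of $G$ are primordial for $(\Im m_\Q)_\Q$, and Corollary \ref{cor:primR} yields $\Prim_{\K_\Q}(G) \cong \Z$. For (c), I would invoke Corollary \ref{cor:primtriv}: given any prime $\ell$, I must produce a proper quotient of $G$ that is not $\ell$-quasi-elementary. In the first subcase, a non-quasi-elementary proper quotient works simultaneously for every $\ell$. In the second, of the two given non-cyclic $q_i$-quasi-elementary proper quotients, whichever index $i$ satisfies $q_i \ne \ell$ provides a witness, since a non-cyclic group cannot be quasi-elementary at two distinct primes.

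Part (b) is the main technical step and is handled by Corollary \ref{cor:primCp} with $\fp = q\Z$. For every prime ideal $\fq \ne q\Z$ of $\Z$, the non-cyclic $q$-quasi-elementary proper quotient of $G$ supplied by the hypothesis is neither cyclic nor $\ell$-quasi-elementary (for $\fq=(\ell)$ with $\ell\ne q$), and hence is not primordial for $(\Im m_\Q)_\fq$. The corollary yields $\Prim_{\K_\Q}(G)\cong \Z/q^n\Z$ for some $n\ge 0$, and it remains to show $n=1$. The inequality $n\ge 1$ follows from Example \ref{ex:prim} \ref{item:qelemprimord}: each proper quotient $G/N$ is $q$-quasi-elementary and hence primordial for $(\Im m_\Q)_q$, ruling out any element of the form $\pm [(G/N)/(G/N)] + y\in \K_\Q(G/N)$ with $y\in \cI_\B(G/N)$. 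The bound $n\le 1$ is the substantive input, namely Solomon's integral induction theorem in the form of \cite{solomon}: for a non-cyclic $q$-quasi-elementary proper quotient $H$ of $G$, the element $q\cdot 1_H$ lies in the $\Z$-span of induced trivial characters from proper subgroups of $H$, producing the required element $q[H/H]+y\in \K_\Q(H)$ with $y\in \cI_\B(H)$. The explicit form of the generator as $[G/G]+\sum_{H\lneq G}a_H[G/H]$ in all cases is the ``moreover'' clause of Theorem \ref{thm:primeqR}.
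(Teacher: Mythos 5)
Your proposal is correct and follows essentially the same route as the paper: verify the hypotheses of Notation \ref{not:4} and Assumption \ref{ass:4} for $m_{\Q}\colon \B\rar\R_{\Q}$, identify the primordial groups via Artin's and Solomon's induction theorems (Example \ref{ex:prim}), and then apply Theorem \ref{thm:primeqR} together with Corollaries \ref{cor:primR}, \ref{cor:primCp} and \ref{cor:primtriv}, using Dokchitser's result from \cite{solomon} to pin down the exponent in part (b). You simply spell out in more detail the verifications that the paper leaves implicit.
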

\begin{proof}
By Example \ref{ex:prim} \ref{item:cycprimord}, $\cP((\Im m_{\Q})_{\Q})$ is the
class of cyclic groups. Let $q$ be a prime number. By Example \ref{ex:prim}
\ref{item:qelemprimord}, $\cP(\Im m_{\Q})$ is the class of quasi-elementary groups, and
$\cP((\Im m_{\Q})_q)$ is the class of $q$-quasi-elementary groups. Moreover, if
$U$ is a non-cyclic $q$-quasi-elementary group, then by \cite{solomon}, there exists
an element of $\K_{\Q}(U)\subseteq \B(U)$ of the form $q[U/U]+\sum_{H\lneq U}a_H[U/H]$.
Since every finite group is primordial for $\B$, the hypotheses of Theorem \ref{thm:primeqR}
are satisfied. Part (a) of the theorem therefore follows from Corollary
\ref{cor:primR}. Finally, note that if $q_1$ and $q_2$ are distinct prime numbers,
then a finite group is both $q_1$-quasi-elementary and $q_2$-quasi-elementary if and only
if it is cyclic. Parts (b) and (c) of the theorem therefore follow from Corollaries
\ref{cor:primCp} and \ref{cor:primtriv}, respectively.
\end{proof}

Fix a prime number $p$. The rest of the section is devoted to the kernel $\K_{\F_p}$
of the morphism of GFIs $m_{\F_p}\colon \B\rar \R_{\F_p}$ as in Example 
\ref{ex:2} \ref{ex:kernel2}. 

First, we prove Theorem \ref{thm:indthm}, which is a characteristic $p$ analogue
of the main result of \cite{solomon}. We recall the statement.

\begin{theorem}\label{thm:Dress}
Let $q$ be a prime number, let $G$ be a $(p,q)$-Dress group that is not
$p$-hypo-elementary, and let $a$ be an
integer. Then $a[G/G]\in \cI_{\Im m_{\F_p}}(G)$ if and only if $q|a$.
\end{theorem}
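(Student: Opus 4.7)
Put $P$ equal to the largest normal $p$-subgroup $O_p(G)$ of $G$ and $\bar G = G/P$. Then $\bar G$ is still $q$-quasi-elementary (as a quotient of such a group), it is non-cyclic (otherwise $G$ would be $p$-hypo-elementary), and one has $O_p(\bar G) = 1$, which forces the normal cyclic subgroup $\bar C \triangleleft \bar G$ of $q$-power index to have order coprime to $p$. My first step is to reduce to $\bar G$ by establishing the equivalence $a[G/G] \in \cI_{\Im m_{\F_p}}(G) \Longleftrightarrow a[\bar G/\bar G] \in \cI_{\Im m_{\F_p}}(\bar G)$. The direction ``$\Leftarrow$'' follows by inflating a witnessing $\F_p$-Brauer relation from $\bar G$ to $G$, using that $m_{\F_p}$ is a morphism of GFIs. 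For ``$\Rightarrow$'', I would use the Brauer quotient functor $(-)[P]$ on $p$-permutation modules: a short calculation shows that $\F_p[G/H][P]$ equals $\F_p[\bar G/(H/P)]$ when $P \leq H$ and vanishes otherwise (since $P$ normal either fixes every coset of $H$ in $G/H$ or none), so applying $(-)[P]$ to a witnessing $\F_p$-Brauer relation in $G$ produces one of the required form in $\bar G$.

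For the necessity direction in $\bar G$, I would evaluate Brauer characters at a generator $\bar c$ of $\bar C$, which is $p$-regular since $|\bar C|$ is coprime to $p$. Given $\bar\omega = a[\bar G/\bar G] - \sum_{\bar H < \bar G} a_{\bar H}[\bar G/\bar H] \in \K_{\F_p}(\bar G)$, the vanishing of Brauer characters gives $a = \sum_{\bar H} a_{\bar H}\,\pi_{\bar H}(\bar c)$, where $\pi_{\bar H}(\bar c) = |(\bar G/\bar H)^{\bar c}|$. Because $\bar C$ is normal cyclic and $\bar c$ generates it, every $\bar G$-conjugate of $\bar c$ is still a generator of $\bar C$; hence a coset $x \bar H$ is fixed by $\bar c$ only when $\bar C \leq \bar H$, in which case every coset is fixed and $\pi_{\bar H}(\bar c) = [\bar G:\bar H] = [Q : \bar H/\bar C]$. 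For $\bar H < \bar G$ containing $\bar C$ this index is a positive power of $q$, so $q \mid a$.

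For the existence direction I must exhibit $\bar\omega \in \K_{\F_p}(\bar G)$ of the form $q[\bar G/\bar G] + \sum c_{\bar H}[\bar G/\bar H]$ with $\bar H < \bar G$. When $q \neq p$, both $|\bar C|$ and $|Q|$ are coprime to $p$, so $\F_p[\bar G]$ is semisimple and Brauer characters (which coincide with ordinary characters on the $p'$-order group $\bar G$) detect isomorphism; hence $\K_{\F_p}(\bar G) = \K_{\Q}(\bar G)$, and the desired relation is supplied by Dokchitser's theorem~\cite{solomon} applied to the non-cyclic $q$-quasi-elementary group $\bar G$. The case $q = p$ is the main obstacle: now $|\bar G|$ has a nontrivial $p$-part coming from the $p$-group $Q$, so the $\Q$-Brauer relation from~\cite{solomon} only matches Brauer characters on $p$-regular elements, which is weaker than a relation in the indecomposable-based ring $\R_{\F_p}(\bar G)$. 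To bridge the gap I plan to invoke Brou\'e's characterisation of $p$-permutation modules by their Brauer quotients: $\bar\omega \in \K_{\F_p}(\bar G)$ iff $\bar\omega[R] = 0$ in $\R_{\F_p}(N_{\bar G}(R)/R)$ for every $p$-subgroup $R \leq \bar G$. For $R \neq 1$, the group $N_{\bar G}(R)/R$ is again $p$-quasi-elementary of strictly smaller order, which sets up an induction on $|Q|$; the base case $R = 1$ reduces to the Brauer character identity delivered by the char-$0$ relation together with the semisimplicity of $\F_p[\bar C]$.
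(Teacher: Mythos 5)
Your reduction to $\bar G = G/O_p(G)$ via the Brauer quotient, your fixed-point computation at a generator of $\bar C$ for the necessity of $q\mid a$, and your treatment of the existence direction when $q\neq p$ (where $p\nmid|\bar G|$ gives $\K_{\F_p}(\bar G)=\K_{\Q}(\bar G)$, so \cite{solomon} applies directly) are all correct, and in places more elementary than the paper's argument, which instead proves necessity by restricting to the normal $p$-hypo-elementary subgroup $N$ and invoking Conlon's theorem to extract the coefficient of $\F_p[N/N]$. The genuine gap is the existence direction for $q=p$, which you yourself call ``the main obstacle'' and for which you only offer a plan. The species criterion you quote (a virtual $p$-permutation module vanishes iff the Brauer characters of all its Brauer quotients vanish) is correct, but to use it you must first \emph{exhibit} a specific element $q[\bar G/\bar G]+\sum c_{\bar H}[\bar G/\bar H]$ and then verify the vanishing of its Brauer quotients at every nontrivial $p$-subgroup $R$; an ``induction on $|Q|$'' over the groups $N_{\bar G}(R)/R$ does not produce the coefficients $c_{\bar H}$, and knowing the theorem for smaller groups says nothing about the Brauer quotients of the particular element you wrote down. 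The danger is real: a $\Q$-relation need not be an $\F_p$-relation --- for $G=C_p\times C_p$ the element $[G/1]-\sum_{|H|=p}[G/H]+p[G/G]$ lies in $\K_{\Q}(G)$ but not in $\K_{\F_p}(G)$, since the modules involved are pairwise non-isomorphic indecomposables --- so the characteristic-zero relation from \cite{solomon} cannot simply be promoted.

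The fix is short, and your reduction has already set it up. When $q=p$, the condition $O_p(\bar G)=1$ forces $Q$ to act faithfully on $\bar C$: the kernel of the action is centralised by $\bar C$ and normalised by $Q$, hence is a normal $p$-subgroup of $\bar G$ and therefore trivial. For such $\bar G=\bar C\rtimes Q$, with $|\bar C|$ prime to $p$ and the action faithful, the paper observes that the cyclic subgroups of $\bar G$ are precisely its $p$-hypo-elementary subgroups. Consequently Artin's criterion for membership in $\K_{\Q}(\bar G)$ (vanishing of fixed-point counts on all cyclic subgroups) coincides with Conlon's criterion for membership in $\K_{\F_p}(\bar G)$ (vanishing of fixed-point counts on all $p$-hypo-elementary subgroups), and the element of $\K_{\Q}(\bar G)$ of the required shape supplied by \cite{solomon} automatically lies in $\K_{\F_p}(\bar G)$. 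This is exactly how the paper concludes; its two-step reduction (first through $G/P$, then through the kernel of the action of $U$ on $C$) is absorbed into your single quotient by $O_p(G)$.
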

\begin{proof}
Since $G$ is a $(p,q)$-Dress group, it is an extension of a $q$-group $U$ by a
normal $p$-hypo-elementary subgroup $N=P\rtimes C$, where $P$ is a $p$-group and
$C$ is cyclic of order coprime to $pq$.

First we prove that if $a[G/G]\in\cI_{\Im m_{\F_p}}(G)$, then $q|a$. Suppose
that there exist integers $a_H$ for $H\lneq G$ such that
$$
a\F_p[G/G]=\sum_{H\lneq G}a_H\F_p[G/H]\in \R_{\F_p}(G),
$$
where the sum runs over representatives of conjugacy classes of subgroups of
$G$, and where $\F_p[G/H]\in \R_{\F_p}(G)$ denotes the linear permutation module
$\Ind_{G/H}\triv_H$ over $\F_p$. By restricting to the normal $p$-hypo-elementary
subgroup $N$, we find that
\begin{eqnarray}\label{eq:Dress}
a\F_p[N/N]=\sum_{H\lneq G}a_H\sum_{g\in G/HN}\F_p[N/N\cap gHg^{-1}].
\end{eqnarray}
By Conlon's Induction Theorem \cite[Lemma 81.2]{CR2}, $p$-hypo-elementary
groups are primordial for $\Im m_{\F_p}$, so the coefficient of $\F_p[N/N]$ on
the right hand side of equation \ref{eq:Dress} must be equal to $a$:
$$
a=\sum_{N\leq H\lneq G}a_H\cdot \#(G/H).
$$
But for every $H\leq G$ that contains $N$, the quantity $\#(G/H)$ is divisible
by $q$, so $a$ is divisible by $q$, as claimed.

Now we show that $q[G/G]\in \cI_{\Im m_{\F_p}}(G)$.
First, we treat a special case:
assume that $P$ is the trivial group, so that $G\cong C\rtimes U$ is
non-cyclic $q$-quasi-elementary, where $C$ is cyclic of order coprime to $pq$.
Assume further that either $p\neq q$, or $U$ acts faithfully on $C$. 
By \cite{solomon}, there exists an element $x=q[G/G]+\sum_{H\lneq G}a_H[G/H]\in \K_{\Q}(G)$.
By Artin's Induction Theorem \cite[Theorem 5.6.1]{Benson}, this is equivalent
to the statement that there
exists an $x\in \K_{\Q}(G)$ as above such that for all cyclic subgroups
$H\leq G$, we have $f_H(x)=0$, where $f_H\colon \B(G)\rar \Z$
is defined on a $G$-set $X$ as the number of fixed points $\#X^H$.
But under the hypotheses on $G$, the cyclic subgroups of $G$ are precisely
the $p$-hypo-elementary subgroups of $G$. By Conlon's Induction Theorem \cite[Lemma 81.2]{CR2},
the above statements are therefore equivalent to the existence of an element
$x=q[G/G]+\sum_{H\lneq G}a_H[G/H]\in \K_{{\F_p}}(G)$, as required.

Now, we deduce the general case. Given a non-$p$-hypo-elementary $(p,q)$-Dress
group $G$, let $\tilde{G}=G/P$. This is a non-cyclic $q$-quasi-elementary group,
$\tilde{G}=C\rtimes U$, where $U$ is a $q$-group, and $C$ is cyclic of order
coprime to $pq$. Let $K$ be the kernel of the action of $U$ on $C$. If $K=U$
and $p=q$, then $\tilde{G}\cong C\times U$, and $G$ is $p$-hypo-elementary,
contradicting the assumptions.
Otherwise, $\bar{G}=\tilde{G}/K$ is as in the special case above, so there
exists an element $x=q[\bar{G}/\bar{G}]+\sum_{H\lneq \bar{G}}a_H[\bar{G}/H]\in \K_{\F_p}(\bar{G})$.
Taking the inflation of $x$ to $G$ yields the desired element of $\K_{\F_p}(G)$,
and the proof is complete.
\end{proof}

\begin{corollary}\label{cor:Dress}
Let $q$ be a prime number. Then $\cP((\Im m_{\F_p})_q)$ is the class of 
$(p,q)$-Dress groups.
\end{corollary}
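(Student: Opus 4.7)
The plan is to deduce the corollary from Theorem~\ref{thm:Dress} together with Dress's induction theorem, as recalled at the end of Example~\ref{ex:prim}. A preliminary observation is that, since $\Im m_{\F_p}(G)$ is torsion-free as a subgroup of the free abelian group $\R_{\F_p}(G)$, the group $G$ lies in $\cP((\Im m_{\F_p})_q)$ if and only if no integer $s$ coprime to $q$ satisfies $s\cdot[G/G]\in\cI_{\Im m_{\F_p}}(G)$; this is just denominator-clearing in the localisation $\Z_{(q)}\otimes\Im m_{\F_p}(G)$.

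For the inclusion $\cP((\Im m_{\F_p})_q)\subseteq\{(p,q)\text{-Dress groups}\}$, I start with $G$ primordial at~$q$. Then $G$ is a fortiori primordial for $\Im m_{\F_p}$, so by Dress's induction theorem there is a prime $q'$ with $G$ a $(p,q')$-Dress group. If $G$ is $p$-hypo-elementary, then $G$ is $(p,q)$-Dress for every prime~$q$ and we are done; otherwise Theorem~\ref{thm:Dress} applied to~$q'$ yields $q'\cdot[G/G]\in\cI_{\Im m_{\F_p}}(G)$, and primordiality at~$q$ then forces $q\mid q'$, i.e.\ $q=q'$. For the reverse inclusion, I take a $(p,q)$-Dress group~$G$. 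If $G$ is not $p$-hypo-elementary, Theorem~\ref{thm:Dress} tells me that any $a\in\Z$ with $a[G/G]\in\cI_{\Im m_{\F_p}}(G)$ is divisible by~$q$, so no $s$ coprime to~$q$ can occur. If $G$ is $p$-hypo-elementary, one needs to know that no nonzero integer multiple of $[G/G]$ lies in $\cI_{\Im m_{\F_p}}(G)$, after which primordiality at every prime is automatic.

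The main obstacle is this last case. Conlon's induction theorem, as invoked in the proof of Theorem~\ref{thm:Dress}, only asserts that $[G/G]\notin\cI_{\Im m_{\F_p}}(G)$ for $p$-hypo-elementary~$G$, whereas the corollary requires the stronger torsion-freeness of the class of $[G/G]$ in the quotient $\Im m_{\F_p}(G)/\cI_{\Im m_{\F_p}}(G)$. I expect this refinement to be extractable from the permutation-module bookkeeping already present in the proof of Theorem~\ref{thm:Dress}: writing $G=P\rtimes C$ with $P$ the normal Sylow $p$-subgroup and $C$ cyclic of order coprime to~$p$, the trivial $\F_p[G]$-module is an indecomposable direct summand of $\F_p[G/H]$ with multiplicity~$1$ exactly when $P\leq H$ and with multiplicity~$0$ otherwise, and comparing multiplicities of indecomposable summands on both sides of any relation $a[G/G]=\sum_{H\lneq G}a_H[G/H]$ in the free abelian group $\R_{\F_p}(G)$, together with the block decomposition separating the trivial module from inflations of non-trivial simple $\F_p[C]$-modules, should force $a=0$.
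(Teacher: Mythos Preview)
Your forward inclusion is correct but takes a longer route than the paper. The paper cites Dress's induction theorem directly in its localised form (as in \cite[Theorem 9.4]{bra1}), which already asserts that for every finite group $G$ the identity of $(\Im m_{\F_p})_q(G)$ lies in $\cI_{(\Im m_{\F_p})_q,\cX}(G)$ with $\cX$ the class of $(p,q)$-Dress groups; Remark~\ref{rem:IndThm}\ref{item:inductionthm} then gives $\cP((\Im m_{\F_p})_q)\subseteq\cX$ in one step. Your argument instead descends to the integral statement, picks up some prime $q'$, and then uses Theorem~\ref{thm:Dress} to force $q'=q$. Both work; the paper's is shorter because it uses the sharper input.

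For the reverse inclusion you are right that Theorem~\ref{thm:Dress} literally only covers $(p,q)$-Dress groups that are not $p$-hypo-elementary, and you are also right that what is needed for $p$-hypo-elementary $G$ is the torsion-freeness of $[G/G]$ modulo $\cI_{\Im m_{\F_p}}(G)$, not merely $[G/G]\notin\cI_{\Im m_{\F_p}}(G)$. However, your proposed fix does not close the gap as written: comparing only the multiplicity of the trivial summand in $a[G/G]=\sum_{H\lneq G}a_H[G/H]$ yields $a=\sum_{P\le H\lneq G}a_H$, not $a=0$, and the block-theoretic refinement you allude to would require substantially more argument. The clean fix is simply to invoke Conlon's theorem in the form the paper uses elsewhere: $p$-hypo-elementary groups are primordial for $(\Im m_{\F_p})_{\Q}$ (see the proof of the final theorem, and note that the coefficient-comparison step in the proof of Theorem~\ref{thm:Dress} itself already relies on this stronger form, not just on integral primordiality). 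Once that is granted, no nonzero multiple of $[G/G]$ lies in $\cI_{\Im m_{\F_p}}(G)$, and the $p$-hypo-elementary case follows immediately.
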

\begin{proof}
By Dress's Induction Theorem in the version as stated in \cite[Theorem 9.4]{bra1},
and by Remark \ref{rem:IndThm} \ref{item:inductionthm}, all primordial groups
for $(\Im m_{\F_p})_q$ are $(p,q)$-Dress groups. The reverse inclusion
follows from Theorem \ref{thm:Dress}.
\end{proof}

\begin{theorem}
Let $G$ be a finite group that is not a $(p,q)$-Dress group for any prime number
$q$. Then:
\begin{enumerate}[leftmargin=*,label={\upshape(\alph*)}]
\item if all proper quotients of $G$ are $p$-hypo-elementary,
then $\Prim_{\K_{\F_p}}(G)\cong \Z$;
\item if $q$ is a prime number such that all proper quotients of $G$ are
$(p,q)$-Dress groups,
and at least one of them is not $p$-hypo-elementary, then $\Prim_{\K_{\F_p}}(G)\cong \Z/q\Z$;
\item if there exists a proper quotient of $G$ that is not a $(p,q)$-Dress group
for any prime number $q$, or if there exist distinct prime numbers $q_1$ and $q_2$
and, for $i=1$ and $2$, a proper quotient of $G$ that is a non-$p$-hypo-elementary
$(p,q_i)$-Dress group, then $\Prim_{\K_{\F_p}}(G)$ is trivial.
\end{enumerate}
Moreover, in all cases, $\Prim_{\K_{F_p}}(G)$ is generated by any element of
$\K_{\F_p}(G)\subseteq \B(G)$ of the form $[G/G]+\sum_{H\lneq G}a_H[G/H]$, $a_H\in \Z$.
\end{theorem}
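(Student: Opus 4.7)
The plan is to apply the machinery of Section~\ref{sec:primitive} to the morphism $m_{\F_p}\colon\B\rar\R_{\F_p}$ with $R=\Z$, following exactly the template of the characteristic~$0$ analogue. First I would verify the hypotheses of Notation~\ref{not:4} and Assumption~\ref{ass:4}: $\Z$ is Euclidean, $\B(H)$ is a free abelian group, inflation on $\B$ is clearly injective, and each $\B(G/N)$ is $\Z$-generated by $1_{\B(G/N)}$ together with $\cI_{\B}(G/N)$. By Example~\ref{ex:prim}~\ref{item:primBurnside}, every finite group is primordial for $\B_{\Q}$, and the standing hypothesis on $G$ combined with Corollary~\ref{cor:Dress} and Dress's induction theorem says that $G$ is not primordial for $\Im m_{\F_p}$. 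Theorem~\ref{thm:primeqR} then applies and already yields the generator statement together with the fact that $\Prim_{\K_{\F_p}}(G)$ is a cyclic $\Z$-module.

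Next I would identify the primordial classes entering the subsequent corollaries: Conlon's induction theorem (as used in the proof of Theorem~\ref{thm:Dress}) gives that $\cP((\Im m_{\F_p})_{\Q})$ equals the class of $p$-hypo-elementary groups, while Corollary~\ref{cor:Dress} identifies $\cP((\Im m_{\F_p})_q)$ with the $(p,q)$-Dress groups. Case~(a) is then immediate from Corollary~\ref{cor:primR}. For case~(b), fix the prime $q$ given in the hypothesis and a non-$p$-hypo-elementary $(p,q)$-Dress proper quotient $G/N_0$. For every prime $r\neq q$, $G/N_0$ cannot also be $(p,r)$-Dress, since otherwise its largest $p'$-quotient would be both $q$- and $r$-quasi-elementary, hence cyclic, forcing $G/N_0$ to be $p$-hypo-elementary. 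So $G/N_0$ witnesses the hypothesis of Corollary~\ref{cor:primCp} with $\fp=q\Z$, giving $\Prim_{\K_{\F_p}}(G)\cong\Z/q^n\Z$; Theorem~\ref{thm:Dress} applied to $G/N_0$ then produces a relation $q\cdot 1_{\B(G/N_0)}+y\in\K_{\F_p}(G/N_0)$ with $y\in\cI_{\B}(G/N_0)$, placing $q$ into the ideal $\fa$ of Theorem~\ref{thm:primeqR} and forcing $n=1$. For case~(c), subcase~(c1) directly supplies a proper quotient not primordial for $\Im m_{\F_p}$, hence not for any localisation $(\Im m_{\F_p})_r$; in subcase~(c2), for each prime $r$ at least one of $q_1,q_2$ differs from $r$, and the corresponding non-$p$-hypo-elementary $(p,q_i)$-Dress proper quotient fails to be $(p,r)$-Dress by the same cyclicity argument. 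Corollary~\ref{cor:primtriv} concludes in both subcases.

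The genuinely substantive step, already dispatched in Theorem~\ref{thm:Dress}, is pinning down the annihilator in case~(b) as exactly $q\Z$ and not merely some $q^n\Z$: this rests on the explicit construction of a Brauer relation with leading coefficient~$q$ using Artin's, Conlon's, and Dokchitser's theorems in tandem. Once this is granted, the argument above is essentially bookkeeping, matching each of the three cases with the appropriate corollary by tracking which proper quotient witnesses non-primordiality at which prime.
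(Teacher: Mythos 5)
Your proposal is correct and follows essentially the same route as the paper: verify the hypotheses of Notation~\ref{not:4} and Assumption~\ref{ass:4}, identify the primordial classes via Conlon's theorem and Corollary~\ref{cor:Dress}, and then invoke Corollaries~\ref{cor:primR}, \ref{cor:primCp} (with Theorem~\ref{thm:Dress} pinning the annihilator to exactly $q\Z$), and \ref{cor:primtriv} for parts (a), (b), (c) respectively. The only quibble is terminological: in your justification that a group cannot be both a $(p,q)$- and a $(p,r)$-Dress group without being $p$-hypo-elementary, the relevant quotient is $G/O_p(G)$ (the quotient by the largest normal $p$-subgroup), not the ``largest $p'$-quotient''; the argument itself is sound.
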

\begin{proof}
By Conlon's Induction Theorem \cite[Lemma 81.2]{CR2}, $\cP((\Im m_{\F_p})_{\Q})$
is the class of $p$-hypo-elementary groups. Let $q$ be a prime number. By
Corollary \ref{cor:Dress}, $\cP((\Im m_{\F_p})_q)$ is the class of $(p,q)$-Dress
groups, and $\cP(\Im m_{\F_p})$ is the class of all groups that are $(p,q')$-Dress
groups for some prime number $q'$. Moreover, if $U$ is a non-$p$-hypo-elementary
$(p,q)$-Dress group, then by Theorem \ref{thm:Dress}, there exists an element of
$\K_{\F_p}(U)\subseteq \B(U)$ of the form $q[U/U]+\sum_{H\lneq U}a_H[U/H]$.
Part (a) of the theorem follows from Corollary \ref{cor:primR}. Finally, note
that if $q_1$ and $q_2$ are distinct prime numbers, then a finite group is
both a $(p,q_1)$-Dress group and a $(p,q_2)$-Dress group if and only if it is
$p$-hypo-elementary.
Parts (b) and (c) of the theorem therefore follow from
Corollaries \ref{cor:primCp} and \ref{cor:primtriv}, respectively.
\end{proof}

\end{document}